\title{ The relation between the independence number and rank of a signed graph}
\author{ Shengjie  He$^1$, Rong-Xia Hao$^1$\footnote{Corresponding author.
Emails: he1046436120@126.com (Shengjie  He), rxhao@bjtu.edu.cn (Rong-Xia Hao)}\\
{\small\em 1. Department of Mathematics, Beijing Jiaotong University, Beijing,
100044, China}\\
  }
\date{} \textwidth 16cm \textheight 22cm \topmargin 0 cm \hoffset
\newtheorem{theorem}{Theorem}[section]
\newtheorem{lemma}[theorem]{Lemma}
\newtheorem{corollary}[theorem]{Corollary}
\begin{document}
\baselineskip 0.50cm \maketitle

\begin{abstract}
A signed graph $(G, \sigma)$ is a graph with a sign attached to each of its
edges, where $G$ is the underlying graph of $(G, \sigma)$.
Let $c(G)$,  $\alpha(G)$ and $r(G, \sigma)$ be the cyclomatic number,
the independence number and the rank of the adjacency matrix of $(G, \sigma)$, respectively.
In this paper, we study the relation among the independence number, the rank
and the cyclomatic number of a signed graph $(G, \sigma)$ with order $n$, and prove that $2n-2c(G) \leq r(G, \sigma)+2\alpha(G) \leq 2n$. Furthermore, the signed graphs that reaching the lower bound are
investigated.

{\bf Keywords}: Independence number; Cyclomatic number;  Rank; Signed graph.

{\bf MSC}: 05C50
\end{abstract}

\section{Introduction}

All graphs considered in this paper are simple and finite.
Let $G=(V, E)$ be an undirected graph with $V=\{ v_{1}, v_{2}, \cdots, v_{n} \}$ is the vertex set and $E$ is the edge set. For a vertex $u \in V(G)$, the $degree$ of $u$, denote by $d_{G}(u)$, is the number of vertices
which are adjacent to $u$. A vertex of $G$ is called a {\it pendant vertex} if it is a vertex of degree one in $G$, whereas a vertex of $G$ is called a {\it quasi-pendant vertex} if it is adjacent to
a pendant vertex in $G$ unless it is a pendant vertex.
Denote by $P_n$, $S_n$ and $C_n$ a path, star and cycle on $n$ vertices, respectively.
The {\it adjacency matrix} $A(G)$ of $G$ is an $n \times n$ matrix whose $(i, j)$-entry equals to
1 if vertices $v_{i}$ and $v_j$ are adjacent and 0 otherwise.
We refer to \cite{BONDY} for undefined terminologies and notation.

A {\it signed graph} $(G, \sigma)$ consists of a simple graph $G=(V, E)$, referred to as its underlying graph,
and a mapping $\sigma: E  \rightarrow  \{ +, - \}$, its edge labelling. To avoid confusion, we often
write $V(G)$ and $E(G)$ for $V(G, \sigma)$ and $E(G, \sigma)$, respectively.
The adjacency matrix of $(G, \sigma)$ is $A(G, \sigma)=(a_{ij}^{\sigma})$ with $a_{ij}^{\sigma}=\sigma(v_iv_j)a_{ij}$, where $(a_{ij})$ is the
adjacent matrix of the underlying graph $G$.
Let $(G, \sigma)$ be a signed graph. An edge $e'$ is said to be positive or negative if $\sigma(e')=+$ or $\sigma(e')=-$, respectively.
In the case of $\sigma =+$, which is an all-positive
edge labelling, $A(G, +)$ is exactly the classical adjacency matrix of $G$.
Thus a simple graph can always be viewed as a singed graph with all positive edges.
Let $C$ be a cycle of $(G, \sigma)$. The $sign$ of $C$ is defined by $\sigma(C)=\prod_{e \in C}\sigma(e)$.
A cycle $C$ is said to be positive or negative if $\sigma(C)=+$ or $\sigma(C)=-$, respectively.
By definition, a  cycle $C$ is positive if and only if it has even number of negative edges.
The $rank$ of a signed graph $(G, \sigma)$,
written as $r(G, \sigma)$, is defined to be the rank of its adjacency matrix $A(G, \sigma)$.
The $nullity$
of $(G, \sigma)$ is the multiplicity of the zero eigenvalues of $A(G, \sigma)$.

A subset $I$ of $V(G)$ is called an $independent$ $set$ if
any two vertices of $I$ are independent in a graph $G$. The $independence$ $number$ of $G$, denoted by
$\alpha(G)$, is the number of vertices in a maximum independent set of $G$.
Let $c(G)$ be the {\it cyclomatic number} of a graph $G$,
that is $c(G)=|E(G)|-|V(G)|+\omega(G)$, where $\omega(G)$ is the number of connected components of $G$.
The $matching$ $number$ of $G$, denoted by $m(G)$,
is the cardinality of a maximum matching of $G$.
For a signed graph $(G, \sigma)$, the independence number, cyclomatic number
and matching number of $(G, \sigma)$  are defined to be the independence number, cyclomatic number and
matching number of its underlying graph, respectively.

If $G$ is a graph which any two cycles (if any) of $G$ have no vertices in common. Denoted by $\mathscr{C}_{G}$ the set of all cycles of $G$. Contracting each cycle of $G$ into a vertex (called a {\it cyclic vertex}), we obtain a forest denoted by $T_{G}$.
Denoted by $\mathscr{O}_{G}$ the set of all cyclic vertex of $G$.
Moreover, denoted by $[T_{G}]$ the subgraph of $T_{G}$
induced by all non-cyclic vertices. It is obviously that $[T_{G}]=T_{G}-\mathscr{O}_{G}$.

The study on rank and nullity of graphs is a major and heated issue in graph theory.
In \cite{WANGLONG}, the relation between the independence number and the rank of a graph was investigated by Wang and Wong. Gutman et al. \cite{GUT} researched the nullity of line graphs of trees.
Mohar et al. characterized the properties of the $H$-rank of mixed graphs in \cite{MOHAR}.
Li et at. studied the relation among the rank, the matching number and the cyclomatic number of
oriented graphs and mixed graphs in \cite{HLSC} and \cite{LSC}, respectively.
Bevis et al. \cite{BEVI} obtained some results examining several cases of vertex addition.
In \cite{MAH2}, Ma et al. researched the relation among the nullity, the dimension of cycle space and the number of pendant vertices of a graph.

In recent years, the study of the rank and nullity of signed graphs received increased attention.
The rank of signed planar graphs was investigated by Tian et al. \cite{TWDY}.
Belardo et al. studied the Laplacian spectral of signed graphs in \cite{BELA}.
You et al. \cite{YLH} characterized the nullity of signed graphs.
In \cite{WSJ}, the relation between the rank of a signed graph and the rank of its underlying graph was
researched by Wang.
The nullity of unicyclic signed graphs and bicyclic signed graphs were studied by
Fan et al. in \cite{FYZ} and \cite{FYZD}, respectively.
Wong et al. \cite{WANGXIN} characterized the positive inertia index of the signed graphs.
He et al. \cite{HSJ} studied the relation among the matching number, the cyclomatic number and the rank of
the signed graphs.
For other research of the rank of a graph one may be referred to those in \cite{CVE,LEE,ROB,HOU,WDY}.

In this paper, the relation among the rank of a signed graph $(G, \sigma)$
and the cyclomatic number and the independence number of its underlying graph is investigated.
We prove that $2n-2c(G) \leq r(G, \sigma)+2\alpha(G) \leq 2n$ for any signed graph $(G, \sigma)$ with order $n$. Moreover,  the extremal graphs which attended the lower bound are characterized.
Our main results are the following Theorems \ref{T30} and \ref{T50}.

\begin{theorem}\label{T30}
Let $(G, \sigma)$ be a signed graph with order $n$. Then
$$2n-2c(G) \leq r(G, \sigma)+2\alpha(G) \leq 2n.$$
\end{theorem}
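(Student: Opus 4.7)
My plan is to prove the two inequalities separately: a short block-matrix argument for the upper bound, and an induction on $n=|V(G)|$, driven by local vertex structure, for the lower bound.

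For the upper bound $r(G,\sigma)+2\alpha(G)\le 2n$, I will fix a maximum independent set $I$ with $|I|=\alpha(G)$ and, ordering vertices with those in $I$ first, write
\[
A(G,\sigma)=\begin{pmatrix} 0 & B \\ B^T & D \end{pmatrix},
\]
where the $I\times I$ block is zero because $I$ is independent. Each of the $\alpha(G)$ columns indexed by $I$ is then supported on $V(G)\setminus I$, living in a space of dimension $n-\alpha(G)$, while the remaining $n-\alpha(G)$ columns contribute at most $n-\alpha(G)$ further dimensions. Adding, $r(G,\sigma)\le 2(n-\alpha(G))$, which is equivalent to the claimed upper bound.

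For the lower bound I will induct on $n$, with the base $n\le 1$ trivial. In the step I handle three cases according to the structure of $G$. First, if $G$ has an isolated vertex $v$, then $r$ and $c$ are unchanged upon deletion while $\alpha$ and $n$ each drop by one, so the inductive hypothesis on $G-v$ immediately yields the claim. Second, if $G$ has a pendant vertex $u$ with neighbor $v$, a sign-preserving row/column elimination on the two extremal rows/columns of $A(G,\sigma)$ gives $r(G,\sigma)=r(G-u-v,\sigma)+2$; a routine argument on maximum independent sets gives $\alpha(G)=\alpha(G-u-v)+1$; and a quick count with $c=|E|-|V|+\omega$, using that removing a degree-$k$ vertex creates at most $k-1$ new components, gives $c(G-u-v)\le c(G)$. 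Applying the inductive hypothesis to $G-u-v$ then closes this case.

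The main case is $\delta(G)\ge 2$. Here every component carries a cycle, so $c(G)\ge 1$ and I may select a vertex $v$ lying on a cycle $C$. Since the two cycle-neighbors of $v$ stay connected along $C-v$, removing $v$ splits its component into at most $d(v)-1$ pieces, and plugging this into the cyclomatic formula yields $c(G-v)\le c(G)-1$. Combined with the monotonicities $\alpha(G-v)\le \alpha(G)$ (any independent set of $G-v$ is one of $G$) and $r(G-v,\sigma)\le r(G,\sigma)$ (principal submatrix), the inductive hypothesis
\[
r(G-v,\sigma)+2\alpha(G-v)\ge 2(n-1)-2c(G-v)\ge 2n-2c(G)
\]
transfers to $G$. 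The only mildly subtle point I anticipate is the cyclomatic drop $c(G-v)\le c(G)-1$: it is essential that $v$ actually lie on a cycle rather than just have degree $\ge 2$, since a barbell contains degree-$2$ cut vertices lying on no cycle. Choosing $v$ on any cycle (which exists because $c(G)\ge 1$ in this case) resolves the issue, and the rest of the argument is standard bookkeeping.
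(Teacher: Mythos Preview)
Your proof is correct. The upper bound argument is identical to the paper's. For the lower bound, the paper inducts on $c(G)$ rather than on $n$: its base case $c(G)=0$ invokes the black-box identity $r(T,\sigma)+2\alpha(T)=2n$ for signed forests (which in turn rests on $r(T,\sigma)=2m(T)$ and K\"onig's theorem), and its inductive step is exactly your Case~3 (delete a vertex on a cycle and use $c(G-v)\le c(G)-1$). Your induction on $n$ with the isolated/pendant cases essentially inlines a proof of that forest identity via pendant stripping, so the argument is a bit more self-contained; conversely, the paper's version is shorter once the forest lemma is granted. The key mechanism---the cyclomatic drop upon deleting a cycle vertex combined with the monotonicity of $r$ and $\alpha$---is the same in both.
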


\begin{theorem}\label{T50}
Let $(G, \sigma)$ be a signed graph with order $n$. Then $ r(G, \sigma)+2\alpha(G) = 2n-2c(G)$
if and only if all the following conditions hold for $(G, \sigma)$:

{\em(i)} the cycles (if any) of $(G, \sigma)$ are pairwise vertex-disjoint;

{\em(ii)} for each cycle (if any) $C_{q}$ of $(G, \sigma)$, either $ q \equiv 0 \ (\rm{mod} \ 4)$ and $\sigma(C_{q})=+$ or $ q \equiv 2 \ (\rm{mod} \ 4)$ and $\sigma(C_{q})=-$;

{\em(iii)} $\alpha(T_{G})=\alpha([T_{G}])+c(G)$.
\end{theorem}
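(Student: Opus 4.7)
The plan is to prove both directions by induction on $n$, using two signed-graph reductions that are standard in this area (see, e.g., \cite{HSJ,FYZ,YLH}): (a) pendant--quasi-pendant removal, in which a pendant vertex $u$ and its neighbor $v\in[T_G]$ satisfy $r(G,\sigma)=r(G-u-v)+2$ and $\alpha(G)=\alpha(G-u-v)+1$; and (b) excision of a pendantly attached nice signed cycle $C_q$ satisfying condition~(ii), in which $r$ drops by $q-2$ and $\alpha$ drops by $q/2$. A direct check shows both reductions preserve the quantity $r(G,\sigma)+2\alpha(G)-2n+2c(G)$, so the extremal identity of Theorem~\ref{T50} is equivalent to the statement that $(G,\sigma)$ can be reduced to a forest by a chain of such moves. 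The base case $c(G)=0$ is immediate: forests satisfy $r+2\alpha=2n=2n-2c$ via the bipartite K\"{o}nig--Egerv\'{a}ry identity, and conditions (i)--(iii) are vacuous.

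For sufficiency, assume (i)--(iii) and $c(G)\ge 1$. The contracted tree $T_G$ has a leaf; if it is non-cyclic I apply (a) to a pendant pair in $[T_G]$, noting that $(u,v)$ contributes $1$ to both $\alpha([T_G])$ and to $\alpha(T_G)-c(G)$, so (iii) descends; if the leaf is cyclic, the corresponding cycle is vertex-disjoint from other cycles by (i) and nice by (ii), so (b) applies, and (iii) descends because a cyclic leaf lies in some maximum independent set of $T_G$ (by (iii)), hence removing it drops both $\alpha(T_G)$ and $c(G)$ by one while leaving $\alpha([T_G])$ untouched. Induction closes.

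For necessity, suppose the extremal equality holds. If (i) fails, two cycles share a vertex; I would delete a well-chosen edge incident to the shared vertex to strictly reduce $c(G)$ while reducing $r+2\alpha$ by strictly less, contradicting the extremal assumption in view of Theorem~\ref{T30}. If (ii) fails for some cycle $C_q$, that cycle contributes rank $q$ rather than $q-2$, and after a rank-preserving isolation of the cycle one finds that $r(G,\sigma)$ strictly exceeds $2n-2c(G)-2\alpha(G)$. Finally, granted (i) and (ii), the combined formulas $r(G,\sigma)=r([T_G])+\sum_{C_q}(q-2)$ and $\alpha(G)=\alpha([T_G])+\sum_{C_q} q/2$ reduce the extremal identity to $\alpha(T_G)=\alpha([T_G])+c(G)$, which is (iii).

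The main obstacle is the necessity of (i). The cycle-excision lemma (b) only applies to pendantly attached cycles, so an overlap between two cycles destroys the inductive structure; arguing that overlapping cycles necessarily violate the extremal identity requires a careful global comparison between $(G,\sigma)$ and a local modification (such as splitting the shared vertex or deleting a suitable edge), together with auxiliary signed-graph rank inequalities. Necessity of (ii) is then a cycle-local check once (i) is in force, and necessity of (iii), as noted above, reduces to a clean algebraic identity involving the tree and cycle contributions.
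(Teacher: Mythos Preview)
Your outline has the right shape, but two of its load-bearing claims are false as stated, and a third step is missing.

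\textbf{Reduction (b) is not a general lemma.} You assert that excising a pendantly attached nice cycle $C_q$ drops $r$ by exactly $q-2$. This fails without the lower-optimal hypothesis. Take $(G,\sigma)$ to be a positive $C_4$ with one extra pendant vertex $y$ attached to $x\in C_4$: here $q=4$ satisfies (ii), $K=G-C_4=\{y\}$ has $r(K,\sigma)=0$, but $r(G,\sigma)=4\neq 0+(4-2)$. The identity $r(G,\sigma)=r(K,\sigma)+(q-2)$ in the paper's Lemma~\ref{L55} is proved \emph{using} Corollary~\ref{C31}(i), which already assumes lower-optimality; you cannot invoke it in the sufficiency direction. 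The paper sidesteps this entirely: condition (iii) together with Lemma~\ref{L055}(ii) forces $T_G$ to have a pendant vertex \emph{outside} $\mathscr{O}_G$, so in sufficiency one only ever uses reduction (a).

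\textbf{The decomposition formulas in your necessity-of-(iii) step are false under (i)--(ii) alone.} You claim that (i) and (ii) give $r(G,\sigma)=r([T_G])+\sum_{C_q}(q-2)$. Take two disjoint positive $C_4$'s, each joined by an edge to a common vertex $y$; then (i) and (ii) hold, $r([T_G])=0$, $\sum(q-2)=4$, but $r(G,\sigma)=6$. (This graph is bipartite, and one checks the row of $y$ is not in the row space of $A(G-y)$.) So the reduction of the extremal identity to (iii) does not go through as written; the paper instead proves the intermediate identity $\alpha(G)=\alpha(T_G)+\sum_C|V(C)|/2-c(G)$ of Lemma~\ref{L58}(iii) by induction, again using Corollary~\ref{C31} and Lemma~\ref{L55}.

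\textbf{A missing step in (a).} Even when the $T_G$-leaf $u$ is non-cyclic, its neighbour $v$ may a priori lie on a cycle, in which case $c(G_0)<c(G)$ and the bookkeeping breaks. The paper proves a separate Claim (using (iii) and Lemma~\ref{L054}) that $v\notin\mathscr{O}_G$. Finally, for necessity of (i), deleting an \emph{edge} at a shared vertex does not produce a contradiction (rank can rise by $2$ under edge deletion); the paper deletes the shared \emph{vertex} and uses $c(G-u)\le c(G)-2$ from Lemma~\ref{L23}(iii), which is exactly Corollary~\ref{C31}(iii),(v).
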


The rest of this paper is organized as follows. Prior to showing our main results, in Section 2, some elementary notations and some useful lemmas are established. In Section 3, we give the proof of the main result of this paper. In Section 4, the extremal signed graphs which attained the lower bound of Theorem \ref{T30} are characterized.

\section{Preliminaries}

In this section, some useful lemmas which will be
used in the proofs of our main results are presented.

For $X \subseteq V(G)$, $G-X$ is the induced subgraph obtained from $G$ by deleting all vertices in $X$ and
all incident edges. In particular, $G-\{ x \}$ is usually written as $G-x$ for simplicity.
For an induced subgraph $H$ and a vertex $u$
outside $H$, the induced subgraph of $G$ with vertex set $V(H) \cup \{ u \}$ is simply written as $H+u$.

\begin{lemma} \label{L16}{\rm\cite{WSJ}}
Let $(G, \sigma)$ be a signed graph.

{\em(i)} If $(H, \sigma)$ is an induced subgraph of $(G, \sigma)$, then $r(H, \sigma) \leq r(G, \sigma)$.

{\em(ii)} If $(G_{1}, \sigma), (G_{2}, \sigma), \cdots, (G_{t}, \sigma)$ are all the connected components of $(G, \sigma)$, then $r(G, \sigma)=\sum_{i=1}^{t}r(G_{i}, \sigma)$.

{\em(iii)} $r(G, \sigma)\geq 0$ with equality if and only if $(G, \sigma)$ is an empty graph.
\end{lemma}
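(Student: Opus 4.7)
The plan is to derive all three parts directly from elementary properties of the adjacency matrix $A(G,\sigma)=(a_{ij}^\sigma)$, which has entries in $\{-1,0,+1\}$ with the same zero-pattern as the ordinary adjacency matrix of $G$. Since the statement is purely about ranks of symmetric $\{-1,0,+1\}$-matrices, linear algebra does essentially all the work; the graph-theoretic content is just identifying what $A(H,\sigma)$ and $A(G,\sigma)$ look like after the relevant operations.

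For part (i), I would first unpack the definition of an induced signed subgraph: $V(H)\subseteq V(G)$, and an edge $uv$ lies in $E(H)$ with sign $\sigma(uv)$ exactly when $u,v\in V(H)$ and $uv\in E(G)$. Consequently, with the vertices of $H$ listed first, $A(G,\sigma)$ takes the block form
\[
A(G,\sigma)=\begin{pmatrix} A(H,\sigma) & B\\ B^{\top} & D\end{pmatrix},
\]
so $A(H,\sigma)$ is a principal submatrix of $A(G,\sigma)$. The desired inequality $r(H,\sigma)\le r(G,\sigma)$ is then the standard fact that deleting rows and the corresponding columns of a matrix cannot increase its rank (one can cite this as a routine consequence of the Cauchy interlacing theorem, or simply observe that any linearly independent set of rows of $A(H,\sigma)$ extends to a linearly independent set of rows of $A(G,\sigma)$).

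For part (ii), the absence of edges between distinct components $G_i,G_j$ means that after reordering the vertex set to list the components consecutively, $A(G,\sigma)$ becomes the block-diagonal matrix $\mathrm{diag}(A(G_1,\sigma),\ldots,A(G_t,\sigma))$. A block-diagonal matrix has rank equal to the sum of the ranks of its diagonal blocks, yielding $r(G,\sigma)=\sum_{i=1}^{t}r(G_i,\sigma)$. For part (iii), non-negativity of rank is immediate, and $r(G,\sigma)=0$ iff $A(G,\sigma)$ is the zero matrix. Because $\sigma(v_iv_j)\in\{+1,-1\}$ whenever $v_iv_j\in E(G)$, the entry $a_{ij}^{\sigma}=\sigma(v_iv_j)a_{ij}$ vanishes precisely when $a_{ij}=0$, so $A(G,\sigma)=0$ iff $G$ has no edges, i.e., $(G,\sigma)$ is empty.

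None of the three parts presents a genuine obstacle; the only thing to be careful about is making the correspondence between the signed-graph language and the matrix language explicit, particularly in (iii), where one must note that a nonzero sign cannot cancel a nonzero unsigned adjacency entry. Since the lemma is attributed to \cite{WSJ}, I would keep the write-up short, just recording the block-matrix observations above and citing the standard linear-algebra facts about principal submatrices and block-diagonal matrices.
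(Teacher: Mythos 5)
Your proof is correct. The paper itself offers no argument for this lemma --- it is simply quoted from \cite{WSJ} --- so there is no in-paper proof to compare against; your block-matrix observations (principal submatrix for (i), block-diagonal form for (ii), and the remark that $a_{ij}^{\sigma}=\sigma(v_iv_j)a_{ij}$ vanishes exactly when $a_{ij}$ does for (iii)) constitute a complete and standard verification of all three parts.
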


\begin{lemma} \label{L13}{\rm\cite{FYZ}}
Let $y$ be a pendant vertex of $(G, \sigma)$ and $x$ is the neighbour of $y$.
Then $r(G, \sigma)=r((G, \sigma)-  \{ x, y \} )+2$.
\end{lemma}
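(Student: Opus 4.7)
The plan is to work directly with the adjacency matrix $A(G,\sigma)$ by choosing a convenient vertex ordering and performing rank-preserving elementary row and column operations. Order $V(G)$ so that the pendant vertex $y$ is listed first, its unique neighbor $x$ is listed second, and the remaining $n-2$ vertices come afterward. Writing $\varepsilon:=\sigma(xy)\in\{+1,-1\}$, the matrix then has the block form
$$A(G,\sigma)=\begin{pmatrix} 0 & \varepsilon & \mathbf{0}^T \\ \varepsilon & 0 & \mathbf{b}^T \\ \mathbf{0} & \mathbf{b} & A' \end{pmatrix},$$
where $\mathbf{b}\in\{-1,0,+1\}^{n-2}$ records the signed adjacencies of $x$ with the remaining vertices and $A'=A((G,\sigma)-\{x,y\})$. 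The decisive structural observation is that row $1$ and column $1$ each contain exactly one nonzero entry, namely $\varepsilon$ in position $2$.

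The key step will be to clear the vector $\mathbf{b}$ using the $\pm 1$ pivot in the pendant row and column. For each coordinate $j$, subtracting $(b_j/\varepsilon)$ times row $1$ from row $j+2$ zeroes the $(j{+}2,\,2)$-entry, and because row $1$ is supported solely on column $2$, no other entry of row $j+2$ changes; in particular the bottom-right block $A'$ is preserved. Performing the symmetric column operations then zeroes the $(2,\,j{+}2)$-entries without disturbing $A'$. After these elementary operations, the matrix has been transformed into a block-diagonal form with the $2\times 2$ block $\begin{pmatrix} 0 & \varepsilon \\ \varepsilon & 0 \end{pmatrix}$ in the upper left and $A'$ in the lower right. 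This $2\times 2$ block is invertible (determinant $-\varepsilon^2=-1$), so the rank of the transformed matrix equals $2+\mathrm{rank}(A')=2+r((G,\sigma)-\{x,y\})$, and since row and column operations preserve rank we conclude $r(G,\sigma)=r((G,\sigma)-\{x,y\})+2$.

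The argument presents no substantive obstacle; the whole proof is a direct block-matrix manipulation exploiting the fact that a pendant vertex produces a row and a column each having a single, unit-magnitude entry. The only step that warrants verification is that the elimination of $\mathbf{b}$ leaves the block $A'$ intact, which is immediate from the zero pattern of row $1$ and column $1$. No sign issues arise because $\varepsilon^{2}=1$ regardless of whether the pendant edge is positive or negative, so the same reduction handles both possibilities uniformly.
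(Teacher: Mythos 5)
Your proof is correct: the block decomposition, the elimination of $\mathbf{b}$ via the $\pm 1$ pivot in the pendant row and column, and the observation that the $2\times 2$ corner block is nonsingular are all sound, and the check that $A'$ is undisturbed is exactly the point that needed verifying. The paper itself gives no proof of this lemma --- it is quoted from the cited reference on the nullity of unicyclic signed graphs --- and your argument is the standard one used there, so there is nothing further to compare.
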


\begin{lemma} \label{L14}{\rm\cite{BEVI}}
Let $x$ be a vertex of $(G, \sigma)$.
Then $r(G, \sigma)-2 \leq r((G, \sigma)-x) \leq r(G, \sigma)$.
\end{lemma}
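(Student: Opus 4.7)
The lemma is a purely linear-algebraic fact about the adjacency matrix $A(G,\sigma)$ and its principal submatrix $A((G,\sigma)-x)$ obtained by deleting the row and column indexed by $x$. My plan is to deduce both inequalities from the standard fact that removing a single row or a single column from a matrix changes its rank by at most one; no signed-graph structure beyond the symmetry of $A(G,\sigma)$ is actually required.

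For the upper bound $r((G,\sigma)-x) \leq r(G,\sigma)$, I would note that $A((G,\sigma)-x)$ is a submatrix of $A(G,\sigma)$. Any linear dependence among the columns of $A(G,\sigma)$ restricts to a linear dependence among the corresponding truncated columns, so the column rank cannot increase under deletion of a row, which gives the claimed inequality.

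For the lower bound $r(G,\sigma) - 2 \leq r((G,\sigma)-x)$, I would argue in two stages. Introduce the intermediate matrix $M$ obtained from $A(G,\sigma)$ by deleting only the column corresponding to $x$ while keeping every row. Erasing a single column drops the rank by at most one, hence $\operatorname{rank}(M) \geq r(G,\sigma) - 1$. The matrix $A((G,\sigma)-x)$ then arises from $M$ by deleting the row indexed by $x$, and the analogous argument applied to rows yields $r((G,\sigma)-x) \geq \operatorname{rank}(M) - 1 \geq r(G,\sigma) - 2$.

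No real obstacle is anticipated here, since both directions reduce to the submatrix-rank inequality from elementary linear algebra. A cleaner spectral alternative would be to invoke Cauchy's interlacing theorem: the eigenvalues of the real symmetric matrix $A((G,\sigma)-x)$ interlace those of $A(G,\sigma)$, which forces the multiplicity of the eigenvalue $0$ to change by at most one when $x$ is removed, and this translates at once into the stated bounds on $r(G,\sigma) - r((G,\sigma)-x)$.
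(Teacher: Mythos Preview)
Your argument is correct. Note, however, that the paper does not supply its own proof of this lemma: it is quoted as a known preliminary result from \cite{BEVI}, so there is no in-paper proof to compare against. The elementary rank-of-a-submatrix reasoning you give (or the interlacing variant) is exactly the standard justification for this fact.
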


\begin{lemma} \label{L15}{\rm\cite{CVE}}
Let $(T, \sigma)$ be a signed acyclic graph.
Then $r(T, \sigma)= r(T)=2m(T)$.
\end{lemma}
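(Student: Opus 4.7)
The plan is to prove all three equalities $r(T,\sigma)=r(T)=2m(T)$ simultaneously by induction on the order $n$ of the signed acyclic graph $(T,\sigma)$. The key tool is Lemma \ref{L13}, which applies in the signed setting and, as a special case (take the all-positive signature on $T$), also in the unsigned setting; this means both $r(T,\sigma)$ and $r(T)$ satisfy the same ``drop by $2$'' recursion upon deleting a pendant vertex together with its neighbour. The matching number $m(T)$ will satisfy a compatible ``drop by $1$'' recursion, so all three quantities evolve in lock-step.

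For the base case $n=0$, all quantities are $0$. For the inductive step, if $(T,\sigma)$ has no edges then $A(T,\sigma)$ is the zero matrix and $r(T,\sigma)=r(T)=0=2m(T)$. Otherwise, $T$ has at least one edge, so, being a forest, it has a pendant vertex $y$; let $x$ be its neighbour. Lemma \ref{L13} applied to $(T,\sigma)$ and to $T$ (regarded as a signed acyclic graph with all-positive signature) yields
\[
r(T,\sigma)=r\bigl((T,\sigma)-\{x,y\}\bigr)+2,\qquad r(T)=r(T-\{x,y\})+2.
\]
Both $(T,\sigma)-\{x,y\}$ and $T-\{x,y\}$ are signed acyclic graphs on fewer vertices, so the induction hypothesis gives $r((T,\sigma)-\{x,y\})=r(T-\{x,y\})=2m(T-\{x,y\})$, and hence $r(T,\sigma)=r(T)=2m(T-\{x,y\})+2$.

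To finish, I would verify the standard fact $m(T)=m(T-\{x,y\})+1$: the inequality $\geq$ is immediate by adjoining the edge $xy$ to a maximum matching of $T-\{x,y\}$, and for $\leq$ one observes that any maximum matching $M$ of $T$ can be modified to contain the pendant edge $xy$ (if $xy\notin M$, then $y$ is unsaturated, so $x$ must be saturated by some $xz\in M$, and swapping $xz$ for $xy$ preserves size), after which $M\setminus\{xy\}$ is a matching of $T-\{x,y\}$ of size $m(T)-1$. Combining everything gives $r(T,\sigma)=r(T)=2m(T)$, completing the induction. There is no genuine obstacle; the only subtlety is that at each step the induction hypothesis must be invoked twice, once for the signed graph and once for its underlying graph, which is legitimate because both are signed acyclic graphs of smaller order. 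An alternative route would be a switching-equivalence argument (a signed forest is switching equivalent to its underlying forest, so their adjacency matrices are conjugate by a $\pm1$-diagonal matrix, hence cospectral), but the inductive proof is self-contained within the lemmas already stated.
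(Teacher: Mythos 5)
Your proof is correct. Note, however, that the paper itself offers no proof of this statement: Lemma \ref{L15} is quoted from the literature (the reference [3] in the paper, Cvetkovi\'c--Doob--Sachs) as a known fact, so there is nothing internal to compare your argument against. Your induction is sound and self-contained given the other quoted lemmas: the base case is trivial, a forest with an edge has a pendant vertex, Lemma \ref{L13} applies verbatim to both $(T,\sigma)$ and to $T=(T,+)$ (whose signed adjacency matrix is the ordinary one), the deleted graph $T-\{x,y\}$ is again acyclic though possibly disconnected (harmless, since you induct on order rather than assume connectivity), and your verification of $m(T)=m(T-\{x,y\})+1$ for a pendant edge is the standard swap argument and is correct. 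The alternative you mention in passing is in fact the more classical route and is worth spelling out: a signed forest has no cycles, hence is vacuously balanced and switching-equivalent to the all-positive signature, so $A(T,\sigma)=D A(T) D$ for a diagonal $\pm 1$ matrix $D$ and $r(T,\sigma)=r(T)$ immediately; the equality $r(T)=2m(T)$ for forests is then the classical result usually attributed to Cvetkovi\'c and Gutman, provable exactly by your pendant-edge induction. Either way the lemma holds; your version has the advantage of using only machinery already stated in the paper.
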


\begin{lemma} \label{L051}{\rm\cite{BONDY}}
Let $T$ be a bipartite graph with order $n$.
Then $\alpha(T)+m(T)=n$.
\end{lemma}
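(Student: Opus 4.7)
The plan is to derive this identity by combining two classical facts: Gallai's complementation identity, which holds for every graph, and K\"onig's theorem, which is specific to the bipartite setting.

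First, for an arbitrary graph $G$ on $n$ vertices I would record the elementary observation that a subset $I \subseteq V(G)$ is an independent set if and only if its complement $V(G) \setminus I$ is a vertex cover of $G$. The forward direction says that if no edge has both endpoints in $I$, then every edge has at least one endpoint in $V(G) \setminus I$; the reverse direction is the contrapositive. Hence complementation is a bijection between independent sets and vertex covers, and in particular maximum independent sets are complementary to minimum vertex covers, yielding Gallai's identity
\[
\alpha(G) + \tau(G) = n,
\]
where $\tau(G)$ denotes the minimum vertex cover number of $G$.

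Second, I would invoke K\"onig's theorem, asserting that for any bipartite graph $T$ one has $\tau(T) = m(T)$, that is, the minimum vertex cover size equals the maximum matching size. Substituting this equality into Gallai's identity applied to $T$ immediately yields
\[
\alpha(T) + m(T) = n,
\]
which is the desired conclusion.

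The only substantive ingredient is K\"onig's theorem itself, whose standard proof constructs a vertex cover of size $|M|$ from a given maximum matching $M$ by taking the set of unmatched vertices on one side of the bipartition, closing under $M$-alternating paths, and then assembling the cover from the reachable and unreachable parts of the two sides. This is where the real work lies in any self-contained write-up; however, since the lemma as stated is cited directly from Bondy and Murty, I would simply invoke K\"onig's theorem as a textbook fact and keep the exposition focused on the complementation argument and the one-line substitution that produces the identity.
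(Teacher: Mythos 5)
Your proof is correct and is exactly the standard textbook derivation: Gallai's complementation identity $\alpha(G)+\tau(G)=n$ combined with K\"onig's theorem $\tau(T)=m(T)$ for bipartite graphs. The paper cites this lemma from Bondy and Murty without giving a proof, so there is nothing to compare against beyond noting that your route is the canonical one.
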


It is obviously that by Lemmas \ref{L15} and \ref{L051}, we have

\begin{lemma} \label{L052}
Let $(T, \sigma)$ be a signed acyclic graph.
Then $r(T, \sigma)+2\alpha(T)=2n$.
\end{lemma}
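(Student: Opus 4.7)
The statement is a direct corollary of the two lemmas immediately preceding it, and the author's phrasing (``It is obviously that by Lemmas \ref{L15} and \ref{L051}, we have\ldots'') signals that no new ideas are required. The plan is therefore simply to chain the two identities together.

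First I would observe that since $(T, \sigma)$ is a signed acyclic graph, the underlying graph $T$ is a forest, and in particular bipartite, so that Lemma \ref{L051} applies. From Lemma \ref{L15} I get the rank identity $r(T, \sigma) = 2m(T)$, and from Lemma \ref{L051} I get the independence/matching identity $\alpha(T) = n - m(T)$. Substituting yields
\[
r(T, \sigma) + 2\alpha(T) \;=\; 2m(T) + 2\bigl(n - m(T)\bigr) \;=\; 2n,
\]
which is the desired equality.

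There is essentially no obstacle here: the only point to be mildly careful about is that Lemma \ref{L051} is stated for bipartite graphs while Lemma \ref{L15} is stated for signed acyclic graphs, so one must explicitly note that a forest is bipartite in order to invoke both lemmas on the same object. Everything else is a one-line algebraic substitution.
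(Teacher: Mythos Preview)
Your proof is correct and is precisely the intended one: the paper derives this lemma directly from Lemmas \ref{L15} and \ref{L051} with no further argument. Your remark that a forest is bipartite is the only bridging observation needed, and your substitution is exactly what the authors have in mind.
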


\begin{lemma} \label{L053}{\rm\cite{HLSC}}
Let $y$ be a pendant vertex of a graph $G$ and $x$ is the neighbour of $y$.
Then $\alpha(G)=\alpha(G-x)=\alpha(G-\{ x, y \})+1$.
\end{lemma}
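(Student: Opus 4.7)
The plan is to verify the two equalities $\alpha(G)=\alpha(G-x)$ and $\alpha(G-x)=\alpha(G-\{x,y\})+1$ separately, using only elementary swap and extension arguments for independent sets.

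For the first equality, I would prove both inequalities. The direction $\alpha(G-x)\le\alpha(G)$ is immediate because any independent set in $G-x$ is also an independent set in $G$. For the reverse direction, I would take a maximum independent set $I$ of $G$. If $x\notin I$, then $I\subseteq V(G-x)$ and we are done. If $x\in I$, then because $xy\in E(G)$ we must have $y\notin I$; here the key observation is that the only neighbour of the pendant vertex $y$ is $x$, so the set $I'=(I\setminus\{x\})\cup\{y\}$ has the same cardinality as $I$, is still independent, and avoids $x$. Hence $I'$ witnesses $\alpha(G-x)\ge|I|=\alpha(G)$.

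For the second equality, I would exploit the fact that once $x$ is deleted, $y$ becomes an isolated vertex in $G-x$. Any independent set of $G-\{x,y\}$ can be augmented by $y$ to produce an independent set of $G-x$, so $\alpha(G-x)\ge\alpha(G-\{x,y\})+1$. Conversely, given a maximum independent set $J$ of $G-x$, we may assume $y\in J$ (otherwise we can add the isolated vertex $y$, contradicting maximality); then $J\setminus\{y\}$ is an independent set of $G-\{x,y\}$, yielding $\alpha(G-\{x,y\})\ge\alpha(G-x)-1$.

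There is no real obstacle here: the argument is a standard pendant-vertex swap combined with the trivial observation that isolated vertices always belong to some maximum independent set. The only point worth stating carefully is the swap $I\mapsto(I\setminus\{x\})\cup\{y\}$, which requires that $y$ have no neighbour in $I\setminus\{x\}$, and this is guaranteed precisely because $y$ is pendant with unique neighbour $x$.
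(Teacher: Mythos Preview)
Your argument is correct. Note, however, that the paper does not supply its own proof of this lemma: it is quoted from \cite{HLSC} without proof, so there is nothing in the paper to compare your approach against. Your swap argument $I\mapsto (I\setminus\{x\})\cup\{y\}$ for the first equality and the isolated-vertex observation for the second are the standard way to establish this fact, and every step is justified.
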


\begin{lemma} \label{L12}{\rm\cite{YLH}}
Let $(C_{n}, \sigma)$ be a signed cycle of order $n$. Then
$$r(C_{n}, \sigma)=\left\{
             \begin{array}{ll}
               n, & \hbox{if $n$ is odd;} \\
               n, & \hbox{if $n \equiv 0 \ (\rm{mod} \ 4)$ and $\sigma(C_{n})=-$;} \\
               n, & \hbox{if $n \equiv 2 \ (\rm{mod} \ 4)$ and $\sigma(C_{n})=+$;} \\
               n-2, & \hbox{if $n \equiv 0 \ (\rm{mod} \ 4)$ and $\sigma(C_{n})=+$;} \\
               n-2, & \hbox{if $n \equiv 2 \ (\rm{mod} \ 4)$ and $\sigma(C_{n})=-$.}
             \end{array}
           \right.
$$
\end{lemma}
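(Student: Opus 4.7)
Setting $f(G,\sigma) := r(G,\sigma) + 2\alpha(G) - 2n + 2c(G)$ gives $f \geq 0$ by Theorem \ref{T30}, so the task reduces to characterizing when $f = 0$. I plan to prove both implications simultaneously by induction on $n$, with base case any forest, where $f \equiv 0$ by Lemma \ref{L052} and conditions (i)-(iii) hold trivially.

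The first inductive case is that $G$ has a pendant $y$ with neighbour $x$, and I set $G' = G - \{x, y\}$. By Lemmas \ref{L13} and \ref{L053}, $r(G,\sigma) = r(G',\sigma) + 2$ and $\alpha(G) = \alpha(G') + 1$; tracking cycles, $c(G) - c(G') \geq 0$ with $c(G) - c(G') \geq 1$ iff $x$ lies on a cycle, so a short calculation yields $f(G,\sigma) = f(G',\sigma) + 2[c(G) - c(G')]$. In the necessity direction, $f(G,\sigma) = 0$ forces $x$ to be non-cyclic (else $f(G,\sigma) \geq 2$), hence $f(G',\sigma) = 0$; the inductive hypothesis furnishes (i)-(iii) for $G'$, each of which lifts to $G$ — (iii) is the delicate lift, handled by noting that $y$ remains a pendant in both $T_G$ and $[T_G]$ attached at the non-cyclic $x$, so Lemma \ref{L053} gives $\alpha(T_G) = \alpha(T_{G'}) + 1$ and $\alpha([T_G]) = \alpha([T_{G'}]) + 1$. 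In the sufficiency direction, condition (iii) itself prevents a cyclic $x$: were $x \in V(C)$ for some cycle $C$, the pendant $y$ would be isolated in $[T_G]$ and thus lie in every maximum independent set of $[T_G]$, contradicting the existence (guaranteed by (iii)) of a maximum independent set of $T_G$ containing $v_C$, which must exclude $y$. So the pendant reduction applies cleanly in both directions, and $f(G,\sigma) = f(G',\sigma) = 0$ by the inductive hypothesis.

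The second case has $G$ without pendants. Isolated vertices peel away trivially, so I may assume minimum degree $\geq 2$ and $c(G) \geq 1$. If some cycle $C_q$ is an entire connected component of $G$, then Lemma \ref{L16}(ii) gives $f(G,\sigma) = f(G - V(C_q),\sigma) + f(C_q, \sigma)$; the case list in Lemma \ref{L12}, together with $\alpha(C_q) = \lfloor q/2 \rfloor$, shows $f(C_q,\sigma) = 0$ precisely when $(q,\sigma(C_q))$ satisfies (ii) and equals $1$ or $2$ otherwise, so the reduction works in both directions, with condition (iii) transferring because $v_{C_q}$ is isolated in $T_G$. The main obstacle is the remaining sub-case, where $G$ has minimum degree $\geq 2$ and no cycle forms a component by itself. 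My plan there is: in the necessity direction, first establish (i) by contradiction — showing that any two cycles sharing a vertex produce a small local configuration forcing $f(G,\sigma) \geq 2$; once (i) is in hand, $T_G$ is a forest with a leaf cyclic vertex $v_C$ attached by a single edge, and deleting a vertex $v \in V(C)$ lying on no other cycle combines Lemma \ref{L14} with $c(G-v) = c(G) - 1$ to pin $r(G-v,\sigma) = r(G,\sigma)$ and $\alpha(G-v) = \alpha(G)$, giving $f(G-v,\sigma) = 0$ and invoking the inductive hypothesis. In the sufficiency direction the same deletion works directly under the assumed (i)-(iii), with (ii)-(iii) verified to persist after removing $v$ (condition (iii) being the delicate one, since one must show that the expansion of $v_C$ into a path of length $q-1$ in the contracted graph still admits a maximum independent set containing all remaining cyclic vertices).
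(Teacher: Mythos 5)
Your proposal does not prove the statement it is attached to. Lemma \ref{L12} is a closed-form computation of the rank of a single signed cycle $(C_{n},\sigma)$ --- a statement about the spectrum of one explicit matrix --- whereas what you have written is an inductive argument for Theorem \ref{T50}, the characterization of lower-optimal signed graphs. Your own text even invokes ``the case list in Lemma \ref{L12}'' to handle the case where a cycle is a whole component, so the lemma is being \emph{assumed} as an input rather than established; as a proof of Lemma \ref{L12} the argument is both off-target and circular. (For what it is worth, the paper does not prove this lemma either: it is quoted from \cite{YLH}.)

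A direct proof would run as follows. By switching (conjugating $A(C_{n},\sigma)$ by a $\pm1$ diagonal matrix, which preserves rank), one may assume every edge of the cycle is positive except possibly the edge $v_{n}v_{1}$, whose sign is $\sigma(C_{n})$. If $\sigma(C_{n})=+$, the matrix is the circulant $A(C_{n})$ with eigenvalues $2\cos(2\pi j/n)$ for $j=0,1,\dots,n-1$; a zero eigenvalue occurs iff $j=n/4$ or $j=3n/4$ is an integer, i.e.\ iff $n\equiv 0 \pmod 4$, in which case exactly two eigenvalues vanish. If $\sigma(C_{n})=-$, the matrix is a negacyclic circulant with eigenvalues $2\cos\bigl((2j+1)\pi/n\bigr)$ for $j=0,1,\dots,n-1$; a zero eigenvalue occurs iff $2j+1=n/2$ or $2j+1=3n/2$ for some integer $j$, i.e.\ iff $n/2$ is an odd integer, which means $n\equiv 2\pmod 4$, and again exactly two eigenvalues vanish. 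In all remaining cases (in particular for all odd $n$) no eigenvalue is zero. Hence $r(C_{n},\sigma)=n-2$ exactly in the two exceptional cases listed and $r(C_{n},\sigma)=n$ otherwise, which is the claimed formula.
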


\begin{lemma} \label{L23}{\rm\cite{WDY}}
Let $G$ be a graph with $x \in V(G)$.

{\em(i)} $c(G)=c(G-x)$ if $x$ lies outside any cycle of $G$;

{\em(ii)} $c(G-x) \leq c(G)-1$ if $x$ lies on a cycle of $G$;

{\em(iii)} $c(G-x) \leq c(G)-2$ if $x$ is a common vertex of distinct cycles of $G$.
\end{lemma}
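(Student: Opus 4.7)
The plan is to prove all three items from a single clean formula and then distinguish cases. Applying $c(H) = |E(H)| - |V(H)| + \omega(H)$ to both $G$ and $G - x$ and subtracting yields, after denoting by $K$ the component of $G$ containing $x$ and by $p$ the number of components of $K - x$, the identity
$$c(G) - c(G - x) = d_G(x) - p.$$
Here $p$ equals the number of equivalence classes of edges at $x$, where two edges $xy$ and $xy'$ are equivalent precisely when $y$ and $y'$ lie in the same component of $K - x$. The three claims now reduce to bounds on the ``deficit'' $d_G(x) - p$.

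For item (i), the hypothesis that $x$ lies on no cycle means every edge at $x$ is a bridge of $K$, so the neighbours of $x$ fall into pairwise distinct components of $K - x$; hence $p = d_G(x)$ (with the convention $p = 0$ when $x$ is isolated) and $c(G) = c(G - x)$. For item (ii), any cycle $C$ through $x$ furnishes a $y$-$y'$ path in $K - x$ joining the two neighbours of $x$ on $C$, so at least two edges at $x$ collapse into the same equivalence class and $p \leq d_G(x) - 1$, giving $c(G - x) \leq c(G) - 1$.

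For item (iii), I would interpret ``common vertex of distinct cycles'' via the block decomposition of $K$: either $x$ is a cut vertex lying in at least two blocks each containing a cycle, or $x$ lies in a single $2$-connected block $B$ with $d_B(x) \geq 3$. In either subcase at least two independent identifications occur among the edges at $x$, so $p \leq d_G(x) - 2$ and $c(G - x) \leq c(G) - 2$.

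The main subtlety is in item (iii): two cycles through $x$ that use the same pair of edges at $x$ but differ only in their middle paths produce just one merger, so the naive reading ``two non-equal cycles'' would not suffice. The cleanest way to handle this is through the block decomposition, which tracks independent cycles through $x$ via the quantities $d_B(x) - 1$ summed over blocks $B \ni x$; this sum equals $d_G(x) - p$, recovering the formula and yielding all three inequalities uniformly.
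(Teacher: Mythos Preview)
The paper does not supply a proof of this lemma; it is quoted from \cite{WDY} and stated without argument. There is therefore nothing in the paper to compare your proposal against, and your write-up has to be judged on its own.

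On its own merits your argument is correct. The identity $c(G)-c(G-x)=d_G(x)-p$, where $p$ is the number of components of $K-x$ and $K$ is the component of $G$ containing $x$, follows exactly as you say from $c(H)=|E(H)|-|V(H)|+\omega(H)$ together with $\omega(G-x)-\omega(G)=p-1$. Items (i) and (ii) then drop out immediately: if $x$ lies on no cycle every edge at $x$ is a bridge, so distinct neighbours land in distinct components of $K-x$ and $p=d_G(x)$; if $x$ lies on some cycle then the two neighbours of $x$ on that cycle are joined in $K-x$, forcing $p\le d_G(x)-1$.

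Your caution about (iii) is not just appropriate but necessary. Under the literal reading ``$x$ lies on two non-equal cycles'' the assertion is false: take $x$ to be a degree-$2$ internal vertex of a theta graph; then $x$ lies on two distinct cycles, yet $d_G(x)-p=2-1=1$, so $c(G-x)=c(G)-1$ rather than $c(G)-2$. Your reformulation through the block decomposition---either two $2$-connected blocks through $x$, or a single block $B$ with $d_B(x)\ge 3$---is precisely the hypothesis under which $\sum_{B\ni x}(d_B(x)-1)\ge 2$, and under that reading your proof of (iii) is sound. The formula $c(G)-c(G-x)=\sum_{B\ni x}(d_B(x)-1)$ that you extract is the clean invariant here, and it recovers all three parts uniformly. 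This block-theoretic reading is the one that makes the lemma true and is consistent with how the paper actually deploys (iii), namely in Corollary~\ref{C31}(v) to force vertex-disjointness of cycles in a lower-optimal signed graph.
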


\begin{lemma} \label{L054}{\rm\cite{HLSC}}
Let $G$ be a graph. Then

{\em(i)} $\alpha(G)-1  \leq \alpha(G-x) \leq \alpha(G) $ for any vertex $x \in V(G)$;

{\em(ii)} $\alpha(G-e) \geq \alpha(G)$ for any edge $e \in E(G)$.
\end{lemma}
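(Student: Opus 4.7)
The plan is to prove each inequality directly from the definition of an independent set, by tracking how the family of independent sets changes under the relevant deletion operation. In both parts, a single maximum independent set of one graph is exhibited (possibly after minor modification) inside the other graph, and the resulting size comparison yields the desired bound.

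For part (i), I would first establish the upper bound $\alpha(G-x)\le\alpha(G)$ by the following observation: any independent set $I'$ of $G-x$ contains no two vertices joined by an edge of $G-x$, and since $G-x$ inherits every edge of $G$ not incident with $x$, the set $I'$ remains independent in $G$ as well. Taking $I'$ to be a maximum independent set of $G-x$ yields $\alpha(G-x)=|I'|\le\alpha(G)$. For the lower bound $\alpha(G-x)\ge\alpha(G)-1$, I would take a maximum independent set $I$ of $G$ and consider $I\setminus\{x\}$. Since removing a vertex from an independent set preserves independence, and since $I\setminus\{x\}\subseteq V(G-x)$, the set $I\setminus\{x\}$ is independent in $G-x$; its size is $|I|$ if $x\notin I$ and $|I|-1$ if $x\in I$, so in either case $\alpha(G-x)\ge |I|-1=\alpha(G)-1$.

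For part (ii), I would note that $G-e$ has the same vertex set as $G$ and a strictly smaller edge set. Therefore every independent set $I$ of $G$ remains an independent set of $G-e$, because the nonadjacency condition in $G$ is only strengthened when edges are removed. Applying this to a maximum independent set of $G$ yields $\alpha(G-e)\ge\alpha(G)$.

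There is no genuine obstacle here; the result follows immediately from the definition of an independent set together with the monotonicity of adjacency under the deletion of vertices or edges. The only point that requires a moment of care is the case split on whether the deleted vertex $x$ belongs to a fixed maximum independent set $I$ of $G$, which is what produces the ``$-1$'' in the lower bound of part (i) and shows the bound is the best possible general estimate.
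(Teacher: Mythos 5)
Your proof is correct and complete. Note that the paper itself gives no proof of this lemma --- it is simply quoted from the cited reference \cite{HLSC} --- so there is no in-paper argument to compare against; your direct verification from the definition of an independent set (monotonicity of independence under edge deletion, plus the case split on whether the deleted vertex lies in a fixed maximum independent set) is exactly the standard elementary argument one would expect, and it establishes both parts without any gaps.
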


\begin{lemma} \label{L055}{\rm\cite{HLSC}}
Let $T$ be a tree with at least one edge and $T_{0}$ be the subtree obtained
from $T$ by deleting all pendant vertices of $T$.

{\em(i)} $\alpha(T)  \leq \alpha(T_{0}) +p(T) $, where $p(T)$ is the number of pendent vertices of $T$;

{\em(ii)} If $\alpha(T) = \alpha(T-D)+|D|$ for a subset $D$ of $V(T)$, then there is a pendant vertex $x$ such
that $x \notin D$.
\end{lemma}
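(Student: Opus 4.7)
For (i), the proof is a direct counting argument. Let $I$ be a maximum independent set of $T$. Since $V(T)\setminus V(T_{0})$ is precisely the set of pendant vertices, we split $I$ as $(I\cap V(T_{0}))\sqcup(I\setminus V(T_{0}))$; the first part is independent in $T_{0}$, so has size at most $\alpha(T_{0})$, and the second part is a set of pendants of size at most $p(T)$. Summing gives $\alpha(T)=|I|\le\alpha(T_{0})+p(T)$.

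For (ii), I would prove the contrapositive: if $D\supseteq P$, where $P$ denotes the set of pendants of $T$, then $\alpha(T)<\alpha(T-D)+|D|$. Since $T$ is a tree and $T-D$ a forest, both are bipartite, so Lemma~\ref{L051} gives $\alpha(T)+m(T)=n$ and $\alpha(T-D)+m(T-D)=n-|D|$. The target strict inequality is therefore equivalent to $m(T-D)<m(T)$. Because $T-D$ is an induced subgraph of $T-P$ we have $m(T-D)\le m(T-P)$, so it suffices to show $m(T-P)<m(T)$; equivalently, every maximum matching of $T$ must use some edge incident to a pendant.

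This auxiliary claim is proved by induction on $n=|V(T)|$. The base $n=2$ is immediate. In the inductive step, suppose for contradiction a maximum matching $M$ of $T$ avoids every pendant. Pick a pendant $y$ with neighbor $x$; then $y$ is unmatched, and maximality of $M$ forces $x$ to be matched by some edge $xw\in M$ with $w\neq y$. Note $w$ itself is not a pendant, else $M$ already uses one. Passing to the forest $F=T-\{x,y\}$, the elementary pendant-matching identity $m(F)=m(T)-1$ (any matching of $F$ extends to $T$ by $xy$, and any maximum matching of $T$ meets $\{x,y\}$ in at most one edge) shows that $M\setminus\{xw\}$ is a maximum matching of $F$. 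Let $T_{1}$ be the component of $F$ containing $w$; since $d_{T}(w)\ge 2$, the vertex $w$ retains a neighbor in $T_{1}$, so $T_{1}$ has at least one edge. The induction hypothesis then forces the induced maximum matching $M_{1}$ of $T_{1}$ to contain an edge incident to some pendant of $T_{1}$.

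The main obstacle is the case analysis at this point. A pendant of $T_{1}$ is either (a) an original pendant of $T$ that lies in $V(T_{1})$, or (b) the vertex $w$ itself, which arises precisely when $d_{T}(w)=2$ so that $w$ becomes a leaf of $T_{1}$. In case (a), the inclusion $M_{1}\subseteq M$ shows that $M$ uses a pendant of $T$, contradicting our standing assumption. In case (b), $M_{1}$ must use the unique edge $wv$ of $T_{1}$ incident to $w$; but then $M$ contains both $xw$ and $wv$, two distinct edges sharing the vertex $w$, contradicting that $M$ is a matching. Either case yields a contradiction, closing the induction and hence establishing (ii).
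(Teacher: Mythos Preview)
The paper does not supply its own proof of this lemma; it is quoted from \cite{HLSC} without argument. So there is nothing in the paper to compare against, and the question reduces to whether your proposal is correct.

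Part (i) is fine as written.

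Part (ii) is correct in outline and the induction closes, but one step deserves an extra sentence. When you assert the dichotomy ``a pendant of $T_{1}$ is either (a) an original pendant of $T$ lying in $V(T_{1})$, or (b) the vertex $w$ itself'', you are implicitly using that $w$ is the \emph{only} vertex of $T_{1}$ whose degree drops in passing from $T$ to $T_{1}$. This is true, because $T_{1}$ is a component of $T-\{x,y\}$; the leaf $y$ has $x$ as its sole neighbour, so deleting $y$ affects no vertex of $T_{1}$, and since $T$ is a tree each component of $(T-y)-x$ contains exactly one $T$-neighbour of $x$, namely $w$ for the component $T_{1}$. Hence every $u\in V(T_{1})\setminus\{w\}$ satisfies $d_{T_{1}}(u)=d_{T}(u)$, and a new pendant can appear only at $w$. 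Once this is said, cases (a) and (b) really are exhaustive and your contradictions go through. With that one line added, the proof is complete.
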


\section{Proof of Theorem \ref{T30}}

In this section, we study the relation among the rank
and the independence number and the cyclomatic number of a signed graph, and give the proof for Theorem \ref{T30}.

\noindent
{\bf The proof of Theorem \ref{T30}.}

First, we prove the inequality on the left of Theorem \ref{T30}.
We argue by induction on $c(G)$ to show that $2n-2c(G) \leq r(G, \sigma)+2\alpha(G) $.
If $c(G)=0$, then $(G, \sigma)$ is a signed tree, and so result follows from Lemma \ref{L052}.
Hence we assume that $c(G) \geq 1$. Let
$u$ be a vertex on some cycle of $(G, \sigma)$ and $(G', \sigma)=(G, \sigma)-u$.
Let $(G_{1}, \sigma), (G_{2}, \sigma), \cdots, (G_{l}, \sigma)$ be all
connected components of $(G', \sigma)$.
By Lemma \ref{L23}, we have
\begin{equation} \label{E1}
\sum_{i=1}^{l}c(G_{i})=c(G') \leq c(G)-1.
\end{equation}
By the induction hypothesis, one has
\begin{equation} \label{E2}
2(n-1)-2c(G') \leq r(G', \sigma)+2\alpha(G')
\end{equation}
By Lemmas \ref{L054} and \ref{L14}, we have
\begin{equation} \label{E3}
\sum_{i=1}^{l}\alpha(G_{i})=\alpha(G') \leq \alpha(G),
\end{equation}
and
\begin{equation} \label{E4}
\sum\limits_{i=1}^{l}r(G_{i}, \sigma)=r(G', \sigma) \leq r(G, \sigma).
\end{equation}
Thus the desired inequality now follows by combining (\ref{E1}), (\ref{E2}),  (\ref{E3}) and (\ref{E4}),
\begin{eqnarray} \label{E0}
r(G, \sigma)+2\alpha(G) & \ge & r(G', \sigma)+2\alpha(G')
\\ \nonumber
& \ge & 2(n-1)-2c(G')
\\ \nonumber
& \geq & 2(n - 1) - 2(c(G) - 1) = 2n-2c(G),
\end{eqnarray}
as desired.

Next,  we show that $ r(G, \sigma)+2\alpha(G) \leq 2n $.
Let $I$ be a maximum independence set of $G$, i.e., $|I|=\alpha(G)$. Then

$$A(G, \sigma)=\left(
                \begin{array}{cc}
                  \mathbf{0} & \mathbf{B} \\
                  \mathbf{B}^{\top} & \mathbf{A} \\
                \end{array}
              \right)
$$
where $\mathbf{B}$ is a matrix of $A(G, \sigma)$ with row indexed by $I$ and column indexed by $V(G)-I$,
$\mathbf{B}^{\top}$ refers to the transpose of $\mathbf{B}$ and $\mathbf{A}$ is the adjacency matrix of the induced subgraph $G-I$. Then it can be checked that
$$r(G, \sigma)\leq r(\mathbf{0}, \mathbf{B})+r(\mathbf{B}^{\top}, \mathbf{A})\leq n-\alpha(G)+n-\alpha(G)=2n-2\alpha(G).$$
Thus,
$$r(G, \sigma)+2\alpha(G)\leq 2n.$$

This completes the proof of Theorem \ref{T30}.
$\square$

A signed graph $(G, \sigma)$ with $r(G, \sigma)+2\alpha(G)= 2n-2c(G)$ is called a {\bf lower-optimal}
signed graph. One can utilize the arguments above to make the following observations.

\begin{corollary} \label{C31}
Let $u$ be a vertex of $(G, \sigma)$ lying on a signed cycle. If $r(G, \sigma)  = 2m(G)-2c(G)$, then
each of the following holds.
\\
{\em(i)} $r(G, \sigma)=r((G, \sigma)-u)$;
\\
{\em(ii)} $(G, \sigma)-u$ is lower-optimal;
\\
{\em(iii)} $c(G)=c(G-u)+1$;
\\
{\em(iv)} $\alpha(G)=\alpha(G-u)$;
\\
{\em(v)} $u$ lies on just one signed cycle of $(G, \sigma)$ and $u$ is not a quasi-pendant vertex of $(G, \sigma)$.
\end{corollary}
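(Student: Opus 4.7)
The plan is to read all five conclusions directly off the chain of inequalities (\ref{E0}) from the proof of Theorem \ref{T30}, applied to the given vertex $u$ with $(G', \sigma) = (G, \sigma) - u$. Since $u$ lies on a cycle, that chain runs
$$
r(G,\sigma) + 2\alpha(G) \ \geq\ r(G', \sigma) + 2\alpha(G') \ \geq\ 2(n-1) - 2c(G') \ \geq\ 2n - 2c(G),
$$
and the lower-optimal hypothesis forces the two outer expressions to agree. Hence every intermediate inequality is actually an equality, and (i)--(iv) are simply the bookkeeping of which equality encodes which statement.

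Items (i) and (iv) come from the first step, which uses $r(G, \sigma) \geq r(G', \sigma)$ from Lemma \ref{L14} together with $\alpha(G) \geq \alpha(G')$ from Lemma \ref{L054}(i). Both inequalities point the same way, so equality of the sum forces equality in each summand; that is exactly $r(G, \sigma) = r((G,\sigma) - u)$ and $\alpha(G) = \alpha(G - u)$. Item (ii) is the second equality: $r(G', \sigma) + 2\alpha(G') = 2(n-1) - 2c(G')$ is by definition the statement that $(G, \sigma) - u$ is lower-optimal. Item (iii) is the third equality, since Lemma \ref{L23}(ii) supplies $c(G') \leq c(G) - 1$, and equality reads $c(G - u) = c(G) - 1$.

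For (v), the first half is a contrapositive: if $u$ were a common vertex of two distinct cycles, Lemma \ref{L23}(iii) would yield $c(G-u) \leq c(G) - 2$, contradicting (iii). The quasi-pendant half requires a short separate calculation. If $u$ had a pendant neighbor $v$, then Lemma \ref{L13} gives $r(G, \sigma) = r((G,\sigma) - \{u, v\}) + 2$ and Lemma \ref{L053} gives $\alpha(G) = \alpha(G - \{u, v\}) + 1$. Substituting into the lower-optimal identity produces
$$
r((G,\sigma) - \{u,v\}) + 2\alpha(G - \{u,v\}) \ =\ 2(n-2) - 2c(G).
$$
Combining with the lower bound of Theorem \ref{T30} applied to $G - \{u,v\}$ (of order $n-2$) forces $c(G - \{u,v\}) \geq c(G)$. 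On the other hand $v$ is a pendant, so $c(G - v) = c(G)$, and the vertex $u$ still lies on a cycle in $G - v$, so Lemma \ref{L23}(ii) yields $c(G - \{u,v\}) \leq c(G) - 1$, a contradiction.

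The main obstacle is conceptual rather than technical: recognizing that the content of (i)--(iv) is precisely that the four-term chain (\ref{E0}) collapses to equalities, and that (v) needs just one extra ingredient, namely the pendant/quasi-pendant reduction formulas paired with Theorem \ref{T30} itself applied to the smaller graph $G - \{u, v\}$.
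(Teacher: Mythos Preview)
Your argument is correct and, for parts (i)--(iv) and the first clause of (v), identical in spirit to the paper's: the lower-optimal hypothesis collapses the chain (\ref{E0}) to a string of equalities, and each of (i)--(iv) is one of those equalities, while Lemma \ref{L23}(iii) combined with (iii) rules out $u$ lying on two cycles.

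The only place you diverge is the quasi-pendant half of (v). The paper dispatches this in one line: if $v$ is a pendant neighbor of $u$, then in $(G,\sigma)-u$ the vertex $v$ is isolated, so $r((G,\sigma)-u)=r((G,\sigma)-\{u,v\})$, and Lemma \ref{L13} gives $r((G,\sigma)-\{u,v\})=r(G,\sigma)-2$; together these contradict (i) directly. Your route through Lemma \ref{L053}, Theorem \ref{T30} applied to $G-\{u,v\}$, and the cyclomatic inequality $c(G-\{u,v\})\leq c(G)-1$ is valid but longer than necessary---you already have (i) in hand, and the rank drop alone finishes it.
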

\begin{proof}
In the proof arguments of Theorem \ref{T30} that justifies $r(G, \sigma)+2\alpha(G) \geq 2n-2c(G)$.
If both ends of (\ref{E0}) are the same, then all inequalities in (\ref{E0}) must be
equalities, and so Corollary \ref{C31} (i)-(iv) are observed.

To show (v). By Corollary \ref{C31} (iii) and Lemma \ref{L23}, we conclude that $u$ lies on just one
signed cycle of $(G, \sigma)$.
Suppose to the contrary that $u$ is a quasi-pendant vertex which adjacent to a pendant vertex $v$.
Then by Lemma \ref{L13}, we have
$$r((G, \sigma)-u)=r((G, \sigma)- \{ u, v  \})=r(G, \sigma)-2,$$
which is a contradiction to (i). This completes the proof of the corollary.
\end{proof}

\section{Proof of Theorem \ref{T50}.}

Recall that a signed graph $(G, \sigma)$ with order $n$ is {\it lower-optimal} if $r(G, \sigma)+2\alpha(G)=2n-2c(G)$,
or equivalently, the signed graph which attain the lower bound in Theorem \ref{T30}.
In this section, we introduce some lemmas firstly, and then we give the proof for Theorem \ref{T50}.
By Lemma \ref{L12}, the following Lemma \ref{L50} can be obtained directly.

\begin{lemma} \label{L50}
The signed cycle $(C_{q}, \sigma)$ is lower-optimal if and only if either $ q \equiv 0 \ (\rm{mod} \ 4)$ and $\sigma(C_{q})=+$ or $ q\equiv 2 \ (\rm{mod} \ 4)$ and $\sigma(C_{q})=-$.
\end{lemma}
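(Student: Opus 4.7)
The plan is to unpack the definition of lower-optimal for the signed cycle $(C_q,\sigma)$ and then read off the condition directly from Lemma~\ref{L12}. Since $C_q$ has $n=q$ vertices and cyclomatic number $c(C_q)=1$, lower-optimality is equivalent to the single scalar equation
\[
r(C_q,\sigma)+2\alpha(C_q)=2q-2.
\]
So the entire proof reduces to computing both terms on the left and comparing.

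First I would record the easy fact that $\alpha(C_q)=\lfloor q/2\rfloor$: in an even cycle a maximum independent set has $q/2$ vertices, while in an odd cycle it has $(q-1)/2$. This is standard and requires no appeal beyond the basic definition. Next I would simply invoke Lemma~\ref{L12} to tabulate $r(C_q,\sigma)$ according to the parity of $q$ and, when $q$ is even, the residue of $q$ modulo $4$ together with the sign $\sigma(C_q)$.

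With both quantities in hand, I would split into cases. If $q$ is odd, then $r(C_q,\sigma)=q$ and $2\alpha(C_q)=q-1$, so the sum is $2q-1\neq 2q-2$, and $(C_q,\sigma)$ is never lower-optimal. If $q$ is even, then $2\alpha(C_q)=q$, so lower-optimality becomes $r(C_q,\sigma)=q-2$. By Lemma~\ref{L12}, $r(C_q,\sigma)=q-2$ holds precisely in the two cases stated in the lemma, namely $q\equiv 0\pmod 4$ with $\sigma(C_q)=+$, or $q\equiv 2\pmod 4$ with $\sigma(C_q)=-$; in the remaining even cases $r(C_q,\sigma)=q$ and the sum equals $2q\neq 2q-2$.

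There is no real obstacle here: the lemma is a one-line corollary of Lemma~\ref{L12}, and the only thing to be careful about is the bookkeeping of the five sub-cases of Lemma~\ref{L12} (odd $q$, and the four parity-sign combinations for even $q$) to confirm that $r(C_q,\sigma)=q-2$ occurs exactly in the two listed scenarios. Once that case analysis is written out, the ``if and only if'' is immediate in both directions.
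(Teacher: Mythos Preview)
Your proposal is correct and follows exactly the approach the paper intends: the paper states that Lemma~\ref{L50} ``can be obtained directly'' from Lemma~\ref{L12}, and what you have written is precisely the short computation (using $c(C_q)=1$ and $\alpha(C_q)=\lfloor q/2\rfloor$) that makes this explicit. There is nothing to add.
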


\begin{lemma} \label{L51}
Let $(G, \sigma)$ be a signed graph and $(G_{1}, \sigma), (G_{2}, \sigma), \cdots, (G_{k}, \sigma)$ be all connected components of $(G, \sigma)$. Then $(G, \sigma)$ is lower-optimal if and only if $(G_{j}, \sigma)$ is lower-optimal for
each $j \in \{1, 2, \cdots, k \}$.
\end{lemma}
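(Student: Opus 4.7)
The plan is to reduce the claim to a statement about component-wise additivity of the four quantities $r$, $\alpha$, $n$, and $c$, and then invoke the lower bound of Theorem \ref{T30} term-by-term.

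First I would record the additivity properties. Writing $n_j = |V(G_j)|$, clearly $n = \sum_{j=1}^{k} n_j$. Lemma \ref{L16}(ii) gives $r(G,\sigma) = \sum_{j=1}^{k} r(G_j,\sigma)$. Since an independent set of $G$ is precisely a disjoint union of independent sets in the components, we also have $\alpha(G) = \sum_{j=1}^{k} \alpha(G_j)$. Finally, from the definition $c(H) = |E(H)| - |V(H)| + \omega(H)$ and the fact that each $G_j$ is connected, $c(G_j) = |E(G_j)| - n_j + 1$, so summing over $j$ yields $\sum_{j=1}^{k} c(G_j) = |E(G)| - n + k = c(G)$.

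Next I would apply Theorem \ref{T30} to every component to obtain
\[
r(G_j,\sigma) + 2\alpha(G_j) \ge 2n_j - 2c(G_j) \qquad (j=1,\ldots,k),
\]
and then sum these $k$ inequalities. Using the four additivity relations above, the sum on the left equals $r(G,\sigma) + 2\alpha(G)$ and the sum on the right equals $2n - 2c(G)$. Hence
\[
r(G,\sigma) + 2\alpha(G) - (2n - 2c(G)) \;=\; \sum_{j=1}^{k}\Bigl(r(G_j,\sigma) + 2\alpha(G_j) - (2n_j - 2c(G_j))\Bigr),
\]
which is a sum of nonnegative terms. Therefore the left-hand side is zero (i.e., $(G,\sigma)$ is lower-optimal) if and only if every summand on the right is zero (i.e., every component $(G_j,\sigma)$ is lower-optimal), which is exactly the desired equivalence.

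There is really no hard step here: the argument is a one-line sum of the lower bound over components once additivity of $r$, $\alpha$, $n$, and $c$ is observed. The only point worth double-checking is the additivity of $c$, since $c$ involves $\omega(G)$; this is where one needs to note that summing $c(G_j) = |E(G_j)| - n_j + 1$ over the $k$ components reproduces $c(G) = |E(G)| - n + k$ exactly.
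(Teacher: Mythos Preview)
Your proof is correct and is essentially the same as the paper's: both arguments rest on the additivity of $r$, $\alpha$, $n$, and $c$ over components together with the lower bound of Theorem~\ref{T30} applied to each component. The only cosmetic difference is that the paper treats sufficiency and necessity separately (the latter by contradiction), whereas you package both directions into the single observation that a sum of nonnegative terms vanishes iff each term does.
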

\begin{proof} (Sufficiency.) For each $i \in \{ 1, 2, \cdots, k \}$, one has that
$$r(G_{i}, \sigma)+2\alpha(G_{i})= 2|V(G_{i})|-2c(G_{i}).$$
Then, one can get $(G, \sigma)$ is lower-optimal immediately follows from the fact that
$$r(G, \sigma)=\sum\limits_{j=1}^{k}r(G_{j}, \sigma),$$
$$\alpha(G)=\sum\limits_{j=1}^{k}\alpha(G_{j}),$$
and
$$c(G)=\sum\limits_{j=1}^{k}c(G_{j}).$$

(Necessity.) Suppose to the contrary that there is a connected component of $(G, \sigma)$, say $(G_{1}, \sigma)$, which is not lower-optimal. Then
$$r(G_{1}, \sigma)+2\alpha(G_{1}) > 2|V(G_{1})|-2c(G_{1}),$$
and by Theorem \ref{T30}, for each $ j \in \{ 2, 3, \cdots, k \}$, we have
$$r(G_{j}, \sigma) +2\alpha(G_{j}) \geq 2|V(G_{j})|-2c(G_{j}).$$
Thus, one has that
$$r(G, \sigma)+2\alpha(G) > 2|V(G)|-2c(G),$$
a contradiction.
\end{proof}

\begin{lemma} \label{L56}
Let $u$ be a pendant vertex of a signed graph $(G, \sigma)$ and $v$ be the vertex which adjacent to $u$. Let $(G_{0}, \sigma)=(G, \sigma)-\{ u, v \}$. Then, $(G, \sigma)$ is lower-optimal if and only if $v$ is not on any signed cycle of $(G, \sigma)$ and $(G_{0}, \sigma)$ is lower-optimal.

\end{lemma}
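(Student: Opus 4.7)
The plan is to prove both directions by combining the pendant-vertex reduction identities (Lemmas \ref{L13} and \ref{L053}) with the cyclomatic invariance of Lemma \ref{L23}(i), and to rule out the possibility that $v$ lies on a cycle by invoking Corollary \ref{C31}(v).

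For the sufficiency direction, assume $v$ lies on no signed cycle of $(G, \sigma)$ and $(G_{0}, \sigma)$ is lower-optimal. The pendant vertex $u$ lies on no cycle, and by hypothesis neither does $v$, so two applications of Lemma \ref{L23}(i) give $c(G_{0}) = c(G)$. Lemma \ref{L13} yields $r(G, \sigma) = r(G_{0}, \sigma) + 2$, and Lemma \ref{L053} gives $\alpha(G) = \alpha(G_{0}) + 1$. Adding these and using that $|V(G_{0})| = n - 2$ together with the lower-optimality of $(G_{0}, \sigma)$, one obtains
$$r(G, \sigma) + 2\alpha(G) = \bigl(r(G_{0}, \sigma) + 2\alpha(G_{0})\bigr) + 4 = 2(n-2) - 2c(G_{0}) + 4 = 2n - 2c(G),$$
so $(G, \sigma)$ is lower-optimal.

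For the necessity direction, assume $(G, \sigma)$ is lower-optimal. First I will show $v$ cannot lie on any signed cycle. Suppose for contradiction that $v$ lies on some signed cycle of $(G, \sigma)$. Since $v$ is adjacent to the pendant vertex $u$, and any vertex lying on a cycle has degree at least two and hence is not itself a pendant, $v$ is a quasi-pendant vertex of $(G, \sigma)$. But Corollary \ref{C31}(v) forbids a vertex on a signed cycle of a lower-optimal signed graph from being quasi-pendant, a contradiction. Therefore $v$ lies on no signed cycle, so again $c(G) = c(G_{0})$, $r(G, \sigma) = r(G_{0}, \sigma) + 2$, and $\alpha(G) = \alpha(G_{0}) + 1$. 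Substituting into $r(G, \sigma) + 2\alpha(G) = 2n - 2c(G)$ gives $r(G_{0}, \sigma) + 2\alpha(G_{0}) = 2|V(G_{0})| - 2c(G_{0})$, proving $(G_{0}, \sigma)$ is lower-optimal.

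The only nontrivial point is the invocation of Corollary \ref{C31}(v); the remainder of the argument is bookkeeping with three recursive identities for rank, independence number, and cyclomatic number under the deletion of a pendant edge whose stem is acyclic. Care is needed to confirm that $v$ is genuinely a quasi-pendant vertex rather than a pendant vertex before applying Corollary \ref{C31}(v), but this follows immediately from the assumption that $v$ lies on a cycle.
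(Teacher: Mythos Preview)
Your proof is correct. The sufficiency direction is identical to the paper's. For necessity, the paper takes a slightly different route: rather than invoking Corollary~\ref{C31}(v) to rule out $v$ lying on a cycle, it first applies Lemmas~\ref{L13} and~\ref{L053} directly to obtain $r(G_{0},\sigma)+2\alpha(G_{0})=2|V(G_{0})|-2c(G)$, then compares this with the lower bound $2|V(G_{0})|-2c(G_{0})$ from Theorem~\ref{T30}; together with the general inequality $c(G_{0})\leq c(G)$ this forces $c(G)=c(G_{0})$, from which both conclusions follow. Your appeal to Corollary~\ref{C31}(v) is a legitimate shortcut, since that corollary already packages precisely the contradiction you need (indeed its proof uses Lemma~\ref{L13} in exactly the way the paper's argument here does). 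The paper's version is marginally more self-contained, but yours is more transparent about where the obstruction lies.
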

\begin{proof}
(Sufficiency.) If $v$ is not on any signed cycle, by Lemma \ref{L23}, we have
$$c(G)=c(G_{0}).$$
By Lemmas \ref{L13} and \ref{L053}, one has that
$$r(G, \sigma)=r(G_{0}, \sigma)+2, \alpha(G)=\alpha(G_{0})+1.$$
Thus, one can get $(G, \sigma)$ is lower-optimal by the condition that
$$r(G_{0}, \sigma)+2\alpha(G_{0})= 2|V(G_{0})|-2c(G_{0}).$$

(Necessity.) By Lemma \ref{L13} and Corollary \ref{C31} and the condition that $(G, \sigma)$ is lower-optimal,
it can be checked that
$$r(G_{0}, \sigma)+2\alpha(G_{0})=2|V(G_{0})|-2c(G).$$
It follows from Theorem \ref{T30} that one has
$$r(G_{0}, \sigma)+2\alpha(G_{0}) \geq 2|V(G_{0})|-2c(G_{0}).$$
By the fact that $c(G_{0}) \leq c(G)$, then we have
$$c(G)=c(G_{0}), r(G_{0}, \sigma)+2\alpha(G_{0})=2|V(G_{0})|-2c(G_{0}).$$
Thus $(G_{0}, \sigma)$ is also lower-optimal and so the lemma is justified.
\end{proof}

\begin{lemma} \label{L55}
Let $(G, \sigma)$ be a signed graph obtained by joining a vertex $x$ of a signed cycle $C_{l}$ by
a signed edge to a vertex $y$ of a signed connected graph $(K, \sigma)$. If $(G, \sigma)$ is lower-optimal, then the following properties hold for $(G, \sigma)$.

{\em(i)} Either $ l \equiv 0 \ (\rm{mod} \ 4)$ and $\sigma(C_{l})=+$ or $ l\equiv 2 \ (\rm{mod} \ 4)$ and $\sigma(C_{l})=-$;

{\em(ii)} $r(G, \sigma)=l-2+r(K, \sigma)$, $\alpha(G)=\frac{l}{2}+\alpha(K)$;

{\em(iii)} $(K, \sigma)$ is lower-optimal;

{\em(iv)} Let $(G', \sigma)$ be the induced signed subgraph of $(G, \sigma)$ with vertex set $V(K)\cup \{ x\}$. Then $(G', \sigma)$ is also lower-optimal;

{\em(v)} $\alpha(G')=\alpha(K)+1$ and $r(G', \sigma)=r(K, \sigma)$.
\end{lemma}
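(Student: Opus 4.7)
The plan is to apply Corollary~\ref{C31} to vertices on the cycle $C_l$, iterate Lemma~\ref{L56} to strip pendants, and use Lemma~\ref{L12} to pin down the parity of $l$. First, applying Corollary~\ref{C31} to $x \in V(C_l)$ gives $r(G,\sigma) = r((G,\sigma)-x)$, $\alpha(G) = \alpha(G-x)$, $c(G-x) = c(G)-1$, and that $(G-x,\sigma) = (P_{l-1},\sigma)\sqcup (K,\sigma)$ is lower-optimal. Lemma~\ref{L51} then yields (iii), and Corollary~\ref{C31}(v) forces $|V(K)|\ge 2$ (else $y$ would be a pendant of $G$ with neighbor $x$).

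For (i), I would induct on $|V(K)|$. The base $|V(K)|=2$ is direct: $K$ is a single signed edge $yv$ with $v$ a pendant of $G$, so Lemma~\ref{L56} forces $(C_l,\sigma) = (G-\{v,y\},\sigma)$ to be lower-optimal, and Lemma~\ref{L50} then yields (i). For the inductive step, first suppose $y$ lies on a signed cycle of $K$. Then applying Corollary~\ref{C31} to $y$ in $G$ (noting $G-y = C_l \sqcup (K-y)$, so $r(G) = r(C_l,\sigma) + r(K-y,\sigma)$ by Lemma~\ref{L16}) and also to $y$ in $(K,\sigma)$ (giving $r(K,\sigma) = r(K-y,\sigma)$), and combining with $r(G) = r(P_{l-1},\sigma) + r(K,\sigma)$ from the first step, forces $r(P_{l-1},\sigma) = r(C_l,\sigma)$; Lemma~\ref{L12} then translates this into ``$l$ even with the prescribed sign'', i.e.\ (i). Otherwise $y$ lies on no cycle of $K$, and I reduce by removing a cycle vertex $w \ne y$ of $K$ via Corollary~\ref{C31} (when $K$ has a cycle) or by stripping a pendant $u \ne y$ of $K$ via Lemma~\ref{L56}: if in the stripping the pendant's neighbor equals $y$ then the reduced graph contains $(C_l,\sigma)$ as a component and Lemma~\ref{L50} finishes; otherwise the inductive hypothesis applies to the smaller ``$C_l$ joined to the connected component of the reduction containing $y$'' instance.

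With (i) in hand $l$ is even, so $r(P_{l-1},\sigma) = l-2$ by Lemma~\ref{L15} and $\alpha(P_{l-1}) = l/2$ by Lemma~\ref{L051}, and the decomposition from the first step gives (ii) directly. For (iv) and (v), I would apply Corollary~\ref{C31} to $x_1$, which opens the cycle in $G-x_1$ into the path $x_2 x_3 \cdots x_{l-1} x$ hanging off $K$ via $xy$, and then iterate Lemma~\ref{L56} on the pendant pairs $\{x_2,x_3\},\{x_4,x_5\},\ldots$ from the free end; in each strip the neighbor lies on no remaining cycle (those all sit inside $K$), so the step is valid, and because $l$ is even the process terminates precisely at $(G',\sigma)$, which is therefore lower-optimal---proving (iv). Each strip drops the rank by $2$ (Lemma~\ref{L13}) and raises $\alpha$ by $1$ (Lemma~\ref{L053}), so combining with $r(G) = r(G-x_1)$, $\alpha(G) = \alpha(G-x_1)$, and (ii) yields $r(G',\sigma) = r(K,\sigma)$ and $\alpha(G') = \alpha(K)+1$, which is (v).

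The main obstacle is the ``$y$ not on any cycle of $K$'' subcase of (i): the inductive reduction must preserve the structure ``$C_l$ joined to a connected $K^*$ via a single edge'' while avoiding collapse into the configuration $(C_l+\text{pendant})$, which is never lower-optimal (by a direct Lemma~\ref{L13} and Lemma~\ref{L053} computation). Moreover the parity of $l$ is not visible in the rank--independence equation alone, so Lemma~\ref{L12} is genuinely necessary to extract it.
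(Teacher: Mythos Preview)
Your plan is correct and follows essentially the same route as the paper: Corollary~\ref{C31} at cycle vertices, pendant stripping via Lemmas~\ref{L13}/\ref{L053}/\ref{L56}, and Lemma~\ref{L50} for the cycle parity. The organizational differences are minor. You obtain (iii) first via Lemma~\ref{L51} applied to $(G,\sigma)-x=(P_{l-1},\sigma)\sqcup(K,\sigma)$, whereas the paper deduces it from the formulas in (ii); both are fine. In (i) you introduce a separate subcase ``$y$ on a cycle of $K$'' and settle it by the rank identity $r(P_{l-1},\sigma)=r(C_l,\sigma)$, which is a pleasant shortcut but not needed: the paper just deletes any cycle vertex $g$ of $K$, and when $g=y$ the cycle $C_l$ becomes a free component of the (lower-optimal) graph $(G,\sigma)-y$, so Lemma~\ref{L51} and Lemma~\ref{L50} finish at once. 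Finally, the ``obstacle'' you flag is not a genuine gap: the reduced graph is lower-optimal, each of its components is lower-optimal by Lemma~\ref{L51}, and since a $C_l$ with a single pendant attached at $x$ is never lower-optimal (exactly your computation, or directly by Corollary~\ref{C31}(v)), the degenerate configuration cannot occur and the induction closes cleanly. For (iv)--(v) your iterated use of Lemma~\ref{L56} along the opened path is equivalent to the paper's single deletion of a neighbour $s$ of $x$ on $C_l$ followed by repeated application of Lemmas~\ref{L13} and~\ref{L053}.
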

\begin{proof}
{\bf  (i):} We show (i) by induction on the order $n$ of $(G, \sigma)$. By Corollary \ref{C31}, $x$ can not
be a quasi-pendant vertex of $(G, \sigma)$, then $y$ is not an isolated vertex of $(G, \sigma)$. Then, $(K, \sigma)$ contains at least two vertices, i.e., $n\geq l+2$. If $n=l+2$, then $(K, \sigma)$ contains exactly
two vertices, without loss of generality, assume them be $y$ and $z$. Thus, one has that $(C_{l}, \sigma)=(G, \sigma)-\{ y, z \}$. By Lemma \ref{L56}, we have $(C_{l}, \sigma)$ is lower-optimal.
Then (i) follows from Lemma \ref{L50} directly.

Next, we consider the case of $n \geq l+3$. Suppose that (i) holds for every lower-optimal signed graph with order smaller than $n$. If $(K, \sigma)$ is a forest. Then $(G, \sigma)$ contains at least one isolated vertex. Let $u$ be
a pendant vertex of $(G, \sigma)$ and $v$ be the vertex which adjacent to $u$. By Corollary \ref{C31}, $v$ is not on $(C_{l}, \sigma)$. By Lemma \ref{L56}, one has that $(G, \sigma)-\{ u, v \}$ is lower-optimal.
By induction hypothesis to $(G, \sigma)-\{ u, v \}$, we have either $ l \equiv 0 \ (\rm{mod} \ 4)$ and $\sigma(C_{l})=+$ or $ l\equiv 2 \ (\rm{mod} \ 4)$ and $\sigma(C_{l})=-$. If $(K, \sigma)$ contains cycles. Let $g$ be a vertex lying on a cycle of $(K, \sigma)$. By Corollary \ref{C31}, $(G, \sigma)-g$ is lower-optimal. Then, the induction hypothesis to $(G, \sigma)-g$ implies that either $ l \equiv 0 \ (\rm{mod} \ 4)$ and $\sigma(C_{l})=+$ or $ l\equiv 2 \ (\rm{mod} \ 4)$ and $\sigma(C_{l})=-$. This completes the proof of (i).

{\bf  (ii):} Since $x$ lies on a cycle of $(G, \sigma)$, by Corollary \ref{C31} and Lemmas \ref{L13} and \ref{L053}, one has that
\begin{equation} \label{E6}
r(G, \sigma)=r((G, \sigma)-x)= r(P_{l-1}, \sigma)+r(K, \sigma)=l-2+r(K, \sigma),
\end{equation}
and
\begin{equation} \label{E7}
\alpha(G)=\alpha(G-x)= \alpha(P_{l-1})+\alpha(K)=\frac{l}{2}+\alpha(K).
\end{equation}

{\bf  (iii):} As $C_{l}$ is a pendant cycle of $(G, \sigma)$, one has that
\begin{equation} \label{E8}
c(K)=c(G)-1.
\end{equation}
By (\ref{E6})-(\ref{E8}), we have
\begin{equation} \label{E9}
r(K, \sigma)+2\alpha(K)=2(n-l)-2c(K).
\end{equation}

{\bf  (iv):} Let $s$ be a vertex of $(C_{l}, \sigma)$ which adjacent to $x$. Then, by  Corollary \ref{C31} and Lemmas \ref{L13} and \ref{L053}, we have
\begin{equation} \label{E10}
r(G, \sigma)=r((G, \sigma)-s)= l-2+r(G', \sigma),
\end{equation}
and
\begin{equation} \label{E11}
\alpha(G)=\alpha(G-s)= \frac{l-2}{2}+\alpha(G').
\end{equation}
Moreover,it is obviously that
\begin{equation} \label{E12}
c(G)=c(G')+1.
\end{equation}
From (\ref{E10})-(\ref{E12}), we have
\begin{eqnarray*}
r(G', \sigma)+2\alpha(G')&=&r(G, \sigma)+2\alpha(G)-2(l-2)\\
&=&2n-2c(G)-2(l-2)\\
&=&2(n-l+1)-2c(G').
\end{eqnarray*}

{\bf  (v):} Combining (\ref{E6}) and (\ref{E10}), one has that
$$r(K, \sigma)=r(G', \sigma).$$
From (\ref{E7}) and (\ref{E11}), we have
$$\alpha(K)+1=\alpha(G').$$

This completes the proof.
\end{proof}

\begin{lemma} \label{L58}
Let $(G, \sigma)$ be a lower-optimal signed graph. Then the following properties hold for $(G, \sigma)$.

{\em(i)} The cycles (if any) of $(G, \sigma)$ are pairwise vertex-disjoint;

{\em(ii)} For each cycle $C_{l}$ of $(G, \sigma)$, either $ l \equiv 0 \ (\rm{mod} \ 4)$ and $\sigma(C_{l})=+$ or $ l\equiv 2 \ (\rm{mod} \ 4)$ and $\sigma(C_{l})=-$;

{\em(iii)} $\alpha(G)=\alpha(T_{G})+\sum_{C \in \mathscr{C}_{G}}\frac{|V(C)|}{2}-c(G)$.
\end{lemma}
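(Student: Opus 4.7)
My plan is to prove (i), (ii), (iii) simultaneously by strong induction on $n = |V(G)|$. Lemma \ref{L51} lets me restrict to connected $(G, \sigma)$, and the base case $c(G) = 0$ is a trivial forest check: (i) and (ii) are vacuous while $T_G = G$ and $\mathscr{C}_G = \emptyset$ make (iii) reduce to $\alpha(G) = \alpha(G)$. For the inductive step with $c(G) \geq 1$, (i) is immediate from Corollary \ref{C31}(v): each vertex on a signed cycle lies on exactly one cycle, so no two cycles can share a vertex.

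For (ii) and (iii), I split on whether $G$ has a pendant vertex. If $u$ is pendant with neighbor $v$, Lemma \ref{L56} forces $v$ off every cycle and makes $(G_0, \sigma) := (G, \sigma) - \{u, v\}$ lower-optimal; by induction (i)--(iii) hold for $G_0$. Since $u, v$ avoid the cycles, $\mathscr{C}_G = \mathscr{C}_{G_0}$ and $c(G) = c(G_0)$, so (ii) passes through; for (iii), the identity $T_G - \{u, v\} = T_{G_0}$ (with $u$ still pendant in $T_G$) together with Lemma \ref{L053} gives $\alpha(G) = \alpha(G_0) + 1$ and $\alpha(T_G) = \alpha(T_{G_0}) + 1$, so (iii) for $G$ follows by adding $1$ to each side of (iii) for $G_0$.

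If instead $G$ has no pendant, then either $G$ itself is a single signed cycle $C_l$---in which case Lemma \ref{L50} gives (ii) and the one-line identity $\alpha(C_l) = 1 + l/2 - 1 = \alpha(T_G) + l/2 - c(G)$ gives (iii)---or $G$ is a cactus whose leaf blocks are cycles. I select a leaf cycle $C_l$ with cut vertex $x$ and aim to fit $(G, \sigma)$ into the setup of Lemma \ref{L55}: the cycle $C_l$ joined by a single bridge edge $xy$ to $(K, \sigma) := (G, \sigma) - V(C_l)$. Two outside neighbors of $x$ in a common component of $G - x$ would close a second cycle through $x$ and contradict (i), leaving only the possibility of two or more distinct external branches as the threat to the single-bridge setup. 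Once the single-bridge framework is established, Lemma \ref{L55}(i) gives (ii) for $C_l$, Lemma \ref{L55}(iii) makes $(K, \sigma)$ lower-optimal (so the induction supplies (i)--(iii) for it), and Lemma \ref{L55}(ii) provides the formulas $r(G, \sigma) = l - 2 + r(K, \sigma)$ and $\alpha(G) = l/2 + \alpha(K)$. For (iii) on $G$, Lemma \ref{L55}(iv) makes the induced signed graph $G' := G[V(K) \cup \{x\}]$ lower-optimal, and applying the necessity direction of Lemma \ref{L56} to the pendant $x$ of $G'$ with neighbor $y$ both places $y$ off every cycle of $K$ and makes $K - y$ lower-optimal. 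Combining (iii) for $K$ and $K - y$ with the equality $\alpha(K - y) = \alpha(K)$ coming from Lemma \ref{L55}(v) yields $\alpha(T_K) = \alpha(T_K - y)$; Lemma \ref{L053} applied to the pendant cyclic vertex $\tilde{C_l}$ of $T_G$ then gives $\alpha(T_G) = \alpha(T_K - y) + 1$, and a short substitution closes the algebra for (iii).

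The chief technical obstacle is the verification that the single-bridge reduction is available in the no-pendant, non-single-cycle case, i.e., ruling out two or more external branches of $G - x$ meeting at $x$. I plan to handle this by a bordered-matrix analysis of $A(G, \sigma)$ along the row and column of $x$: the restriction of $x$'s adjacency vector to each external branch is the indicator of that branch's sole attachment point $y_i$, and a Haynsworth-type inertia argument shows this indicator fails to lie in the column space of the block of $A((G, \sigma) - x)$ on that branch, forcing $r(G, \sigma) \geq r((G, \sigma) - x) + 1$ and violating the identity $r(G, \sigma) = r((G, \sigma) - x)$ from Corollary \ref{C31}(i).
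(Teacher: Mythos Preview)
Your inductive strategy and the reductions via Lemma~\ref{L56} (pendant vertex) and Lemma~\ref{L55} (pendant cycle) match the paper's proof. The substantive difference is how the reduction target is selected. The paper first chooses a pendant vertex of $T_G$ and then branches on whether it is an actual vertex of $G$ or a cyclic vertex; in the latter case the corresponding pendant cycle automatically has a cut vertex of degree exactly~$3$, since (once (i) is in hand) the degree in $T_G$ of a cyclic vertex equals the number of edges of $G$ leaving that cycle. Your split instead asks whether $G$ itself has a pendant vertex, and in the no-pendant case you pick an arbitrary leaf-block cycle, which forfeits the degree-$3$ guarantee and manufactures the ``multi-branch'' obstacle you flag.

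Your proposed remedy---a bordered-matrix/column-space argument forcing $r(G,\sigma) > r((G,\sigma)-x)$ whenever $x$ has at least two external branches---is not justified as stated: whether $e_{y_i}$ lies in the column space of $A(B_i,\sigma)$ depends on the null-space structure of the branch $B_i$ at $y_i$, and you have not tied this to lower-optimality or to the no-pendant hypothesis. The obstacle disappears if you simply choose the cycle better. In the no-pendant case every non-cyclic vertex retains its $G$-degree in $T_G$ (two $G$-neighbours of such a vertex on a common cycle would force a second cycle through it, contradicting (i)), so every leaf of $T_G$ is a cyclic vertex; take that cycle and Lemma~\ref{L55} applies directly. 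With this correction your argument goes through; your derivation of (iii) via $K$ and $K-y$ is longer than the paper's route---apply induction to $G'$ and use $T_{G'}=T_G$---but it is correct.
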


\begin{proof}
{\bf  (i):} By Corollary \ref{C31}, (i) follows directly.

{\bf  (ii)-(iii):} We argue by induction on the order $n$ of $G$ to show (ii)-(iii). If $n=1$, then (ii)-(iii)
hold trivially. Next, we consider the case of $n \geq 2$. Suppose that (ii)-(iii) holds for every lower-optimal signed graph with order smaller than $n$.

If $E(T_{G})=0$, i.e., $T_{G}$ is an empty graph, then each component of $(G, \sigma)$ is a cycle or an isolated vertex. By Lemmas \ref{L50} and \ref{L51}, we have either $ l \equiv 0 \ (\rm{mod} \ 4)$ and $\sigma(C_{l})=+$ or $ l\equiv 2 \ (\rm{mod} \ 4)$ and $\sigma(C_{l})=-$.  For each cycle $C_{l}$, it is routine to check that
$\alpha(C_{l})= \lfloor \frac{l}{2} \rfloor$.  Then (ii) and (iii) follows.

If $E(T_{G}) \geq 1$. Then $T_{G}$ contains at least one pendant vertex, say $x$. If $x$ is also a pendant vertex in $(G, \sigma)$, then $(G, \sigma)$ contains a pendant vertex. If $x$ is a vertex obtained by contracting a cycle of $(G, \sigma)$, then $(G, \sigma)$ contains a pendant cycle.
Then we will deal with the following two cases.

{\bf Case 1.} If $x$ is also a pendant vertex in $(G, \sigma)$. Let $y$ be the unique neighbour of $x$ and
$(G_{0}, \sigma)= (G, \sigma)-\{ x, y \}$. By Lemma \ref{L56}, one has that $y$ is not on any cycle of $(G, \sigma)$
and $(G_{0}, \sigma)$ is lower-optimal. By induction hypothesis, we have

(a) For each cycle $C_{l}$ of $(G_{0}, \sigma)$, either $ l \equiv 0 \ (\rm{mod} \ 4)$ and $\sigma(C_{l})=+$ or $ l\equiv 2 \ (\rm{mod} \ 4)$ and $\sigma(C_{l})=-$;

(b) $\alpha(G_{0})=\alpha(T_{G_{0}})+\sum_{C \in \mathscr{C}_{G_{0}}}\frac{|V(C)|}{2}-c(G_{0})$.

It is routine to check that all cycles of $(G, \sigma)$ are also in $(G_{0}, \sigma)$. Then for each cycle $C_{l}$ of $(G, \sigma)$, either $ l \equiv 0 \ (\rm{mod} \ 4)$ and $\sigma(C_{l})=+$ or $ l\equiv 2 \ (\rm{mod} \ 4)$ and $\sigma(C_{l})=-$. Thus (ii) follows. Furthermore, one has that
$$c(G)=c(G_{0}).$$

Sine $x$ is a pendant vertex of $(G, \sigma)$ and $y$ is a quasi-pendant vertex which is not in any cycle of $(G, \sigma)$, $x$ is a pendant vertex of $T_{G}$ and $y$ is a quasi-pendant vertex of $T_{G}$. Moreover,
$T_{G_{0}}=T_{G}-\{ x, y \}$. Thus, by Lemma \ref{L053} and assertion (b), we have
\begin{eqnarray*}
\alpha(G)&=&\alpha(G_{0})+1\\
&=&\alpha(T_{G_{0}})+\sum_{C \in \mathscr{C}_{G_{0}}}\frac{|V(C)|}{2}-c(G_{0})+1\\
&=&\alpha(T_{G})-1+\sum_{C \in \mathscr{C}_{G_{0}}}\frac{|V(C)|}{2}-c(G_{0})+1\\
&=&\alpha(T_{G})+\sum_{C \in \mathscr{C}_{G}}\frac{|V(C)|}{2}-c(G).
\end{eqnarray*}
Thus, (iii) holds in this case.

{\bf Case 2.} $(G, \sigma)$ contains a pendant cycle, say $C_{q}$.

In this case, let $x$ be the unique vertex of $C_{q}$ of degree 3. Let $K=G-C_{q}$ and $(G_{1}, \sigma)$ be the induced signed subgraph of $(G, \sigma)$ with vertex set $V(K)\cup \{ x\}$. By Lemma \ref{L55} (iv), one has that $(G_{1}, \sigma)$ is lower-optimal. By induction hypothesis, we have

(c) For each cycle $C_{l}$ of $(G_{1}, \sigma)$, either $ l \equiv 0 \ (\rm{mod} \ 4)$ and $\sigma(C_{l})=+$ or $ l\equiv 2 \ (\rm{mod} \ 4)$ and $\sigma(C_{l})=-$;

(d) $\alpha(G_{1})=\alpha(T_{G_{1}})+\sum_{C \in \mathscr{C}_{G_{1}}}\frac{|V(C)|}{2}-c(G_{1})$.

By Lemma \ref{L55} (i), we have either $ q \equiv 0 \ (\rm{mod} \ 4)$ and $\sigma(C_{q})=+$ or $ q\equiv 2 \ (\rm{mod} \ 4)$ and $\sigma(C_{q})=-$. Thus, (ii) follows from
$$\mathscr{C}_{G}=\mathscr{C}_{G_{1}} \cup C_{q}=\mathscr{C}_{K} \cup C_{q}.$$
Moreover, one has that
\begin{equation} \label{E13}
\sum_{C \in \mathscr{C}_{G}}\frac{|V(C)|}{2}=\sum_{C \in \mathscr{C}_{G_{1}}}\frac{|V(C)|}{2}+\frac{q}{2}=\sum_{C \in \mathscr{C}_{K}}\frac{|V(C)|}{2}+\frac{q}{2}.
\end{equation}

Since $C_{q}$ is a pendant cycle of $(G, \sigma)$, it is obviously that
\begin{equation} \label{E14}
c(G_{1})=c(K)=c(G)-1.
\end{equation}
By Lemma \ref{L55} (v), one has that
\begin{equation} \label{E15}
\alpha(G_{1})=\alpha(K)+1.
\end{equation}
Note that
\begin{equation} \label{E150}
T_{G_{1}}=T_{G}.
\end{equation}
By Lemma \ref{L55} (ii) and (\ref{E13})-(\ref{E150}), one has that
\begin{eqnarray*}
\alpha(G)&=&\alpha(K)+\frac{p}{2}\\
&=&\alpha(G_{1})+\frac{p}{2}-1\\
&=&\alpha(T_{G_{1}})+\sum_{C \in \mathscr{C}_{G_{1}}}\frac{|V(C)|}{2}-c(G_{1})+\frac{p}{2}-1\\
&=&\alpha(T_{G})+\sum_{C \in \mathscr{C}_{G}}\frac{|V(C)|}{2}-c(G_{1})-1\\
&=&\alpha(T_{G})+\sum_{C \in \mathscr{C}_{G}}\frac{|V(C)|}{2}-c(G).
\end{eqnarray*}

This completes the proof.
\end{proof}

\noindent
{\bf The proof of Theorem \ref{T50}.}
(Sufficiency.) We proceed by induction on the order $n$ of $(G, \sigma)$. If $n=1$, then the result holds trivially. Therefore we assume that $(G, \sigma)$ is a signed graph with order $n \geq 2$ and satisfies (i)-(iii). Suppose that any signed graph of order smaller than $n$ which satisfes (i)-(iii) is
lower-optimal. Since the cycles (if any) of $(G, \sigma)$ are pairwise vertex-disjoint, $(G, \sigma)$ has
exactly $c(G)$ cycles, i.e., $|\mathscr{O}_{G}|=c(G)$.

If $E(T_{G})=0$, i.e., $T_{G}$ is an empty graph, then each component of $(G, \sigma)$ is a cycle or an isolated vertex. By (ii) and Lemma \ref{L50}, we have $(G, \sigma)$ is lower-optimal.

If $E(T_{G}) \geq 1$. Then $T_{G}$ contains at least one pendant vertex. By (iii), one has that
$$\alpha(T_{G})=\alpha([T_{G}])+c(G)=\alpha(T_{G}-\mathscr{O}_{G})+c(G).$$
Thus, by Lemma \ref{L055}, there exists a pendent vertex of $T_{G}$ not in $\mathscr{O}_{G}$. Then, $(G, \sigma)$ contains at least one pendant vertex, say $u$. Let $v$ be the unique neighbour of $u$ and let
$(G_{0}, \sigma)=(G, \sigma)-\{ u, v \}$. It is obviously that $u$ is a pendant vertex of $T_{G}$ adjacent to $v$ and
$T_{G_{0}}=T_{G}-\{ u, v \}$. By Lemma \ref{L053}, one has that
$$\alpha(T_{G})=\alpha(T_{G}-v)=\alpha(T_{G}-\{ u, v \})+1.$$

{\bf Claim.} $v$ does not lie on any cycle of $(G, \sigma)$.

By contradiction, assume that $v$ lies on a cycle of $(G, \sigma)$. Then $v$ is in $\mathscr{O}_{G}$. Note that the size of $\mathscr{O}_{G}$ is $c(G)$. Then, $H:=(T_{G}-v) \cup K_{1}$ is a spanning subgraph of $T_{G}$.
Delete all the edges $e$ in $H$ such that $e$ contains at least one end-vertex in $\mathscr{O}_{G} \backslash \{ v \}$.
Thus, the resulting graph is $[T_{G}] \cup c(G)K_{1}$. By Lemma \ref{L054}, one has that
$$\alpha([T_{G}] \cup c(G)K_{1}) \geq \alpha((T_{G}-v) \cup K_{1}),$$
that is,
$$\alpha([T_{G}])+c(G) \geq \alpha(T_{G}-v)+1.$$
Then, we have
$$\alpha([T_{G}]) \geq \alpha(T_{G}-v)+1-c(G)=\alpha(T_{G})+1-c(G),$$
a contradiction to (iii). This completes the proof of the claim.

Thus, $v$ does not lie on any cycle of $(G, \sigma)$. Moreover, $u$ is also a pendant vertex of $[T_{G}]$ which adjacent to $v$ and $[T_{G_{0}}]=T_{G}-\{ u, v \}$. By Lemma \ref{L053}, one has that
$$\alpha([T_{G}])=\alpha([T_{G_{0}}])+1.$$
It is routine to checked that
$$c(G)=c(G_{0}).$$
Thus,
\begin{eqnarray*}
\alpha(T_{G_{0}})&=&\alpha(T_{G})-1\\
&=&\alpha([T_{G}])+c(G)-1\\
&=&\alpha([T_{G_{0}}])+1+c(G)-1\\
&=&\alpha([T_{G_{0}}])+c(G_{0}).
\end{eqnarray*}

Combining the fact that all cycles of $(G, \sigma)$ belong to $(G_{0}, \sigma)$, one has that
$(G_{0}, \sigma)$ satisfies all the conditions (i)-(iii). By induction hypothesis, we have $(G_{0}, \sigma)$
is lower-optimal. By Lemma \ref{L56}, we have $(G, \sigma)$ is lower-optimal.

(Necessity.) Let $(G, \sigma)$ be a lower-optimal signed graph. If $(G, \sigma)$ is a signed acyclic graph, then
(i)-(iii) holds directly. So one can suppose that $(G, \sigma)$ contains cycles. By Lemma \ref{L58}, one has that the cycles (if any) of $(G, \sigma)$ are pairwise vertex-disjoint and for each cycle $C_{l}$ of $(G, \sigma)$, either $ l \equiv 0 \ (\rm{mod} \ 4)$ and $\sigma(C_{l})=+$ or $ l\equiv 2 \ (\rm{mod} \ 4)$ and $\sigma(C_{l})=-$. This completes the proof of (i) and (ii).

Next, we argue by induction on the order $n$ of $(G, \sigma)$ to show (iii). Since $(G, \sigma)$ contains cycles, $n \geq 3$. If $n=3$, then $(G, \sigma)$ is a 3-cycle and (iii) holds trivially. Therefore we assume that $(G, \sigma)$ is a lower-optimal signed graph with order $n \geq 4$.
Suppose that (iii) holds for all lower-optimal signed graphs of order smaller than $n$.

If $E(T_{G})=0$, i.e., $T_{G}$ is an empty graph, then each component of $(G, \sigma)$ is a cycle or an isolated vertex. Then, (iii) follows immediately by Lemma \ref{L50}.

If $E(T_{G}) \geq 1$. Then $T_{G}$ contains at least one pendant vertex, say $x$. If $x$ is also a pendant vertex in $(G, \sigma)$, then $(G, \sigma)$ contains a pendant vertex. If $x$ is a vertex obtained by contracting a cycle of $(G, \sigma)$, then $(G, \sigma)$ contains a pendant cycle.
Then we will deal with (iii) with the following two cases.

{\bf Case 1.} $x$ is a pendant vertex of $(G, \sigma)$.

Let $y$ be the unique neighbour of $x$ and
$(G_{1}, \sigma)= (G, \sigma)-\{ x, y \}$. By Lemma \ref{L56}, one has that $y$ is not on any cycle of $(G, \sigma)$
and $(G_{1}, \sigma)$ is lower-optimal. By induction hypothesis, we have
$$\alpha(T_{G_{1}})=\alpha([T_{G_{1}}])+c(G_{1}).$$
By the fact that $y$ is not on any cycle of $(G, \sigma)$, then
$$c(G)=c(G_{1}).$$
Note that $x$ is also a pendant vertex of $T_{G}$ which adjacent to $y$, then $T_{G_{1}}=T_{G}-\{ x, y \}$ and
$[T_{G_{1}}]=[T_{G}]-\{ x, y \}$. By Lemma \ref{L053}, one has that
$$\alpha(T_{G})=\alpha(T_{G_{1}})+1, \alpha([T_{G}])=\alpha([T_{G_{1}}])+1.$$
Thus, we have
\begin{eqnarray*}
\alpha(T_{G})&=&\alpha(T_{G_{1}})+1\\
&=&\alpha([T_{G_{1}}])+c(G_{1})+1\\
&=&\alpha([T_{G}])-1+c(G)+1\\
&=&\alpha([T_{G}])+c(G).
\end{eqnarray*}

The result follows.

{\bf Case 2.} $(G, \sigma)$ contains a pendant cycle, say $C_{q}$.

Let $u$ be the unique vertex of $C_{q}$ of degree 3 and $K=G-C_{q}$. By Lemma \ref{L55}, one has that
$(K, \sigma)$ is lower-optimal. By induction hypothesis, we have
\begin{equation} \label{E20}
\alpha(T_{K})=\alpha([T_{K}])+c(K).
\end{equation}
In view of Lemma \ref{L55}, one has
\begin{equation} \label{E21}
\alpha(G)=\alpha(K)+\frac{q}{2}.
\end{equation}
It is obviously that
$$\mathscr{C}_{G}=\mathscr{C}_{K} \cup C_{q}.$$
Then, we have
\begin{equation} \label{E22}
\sum_{C \in \mathscr{C}_{G}}\frac{|V(C)|}{2}=\sum_{C \in \mathscr{C}_{K}}\frac{|V(C)|}{2}+\frac{q}{2}.
\end{equation}
Since  $(G, \sigma)$ and  $(K, \sigma)$ are lower-optimal, by Lemma \ref{L58}, we have
\begin{equation} \label{E23}
\alpha(T_{G})=\alpha(G)-\sum_{C \in \mathscr{C}_{G}}\frac{|V(C)|}{2}+c(G),
\end{equation}
and
\begin{equation} \label{E24}
\alpha(T_{K})=\alpha(K)-\sum_{C \in \mathscr{C}_{K}}\frac{|V(C)|}{2}+c(K).
\end{equation}
Moreover, it is routine to check that
\begin{equation} \label{E25}
c(G)=c(K)+1.
\end{equation}
Combining (\ref{E20})-(\ref{E25}), we have
\begin{eqnarray*}
\alpha(T_{G})&=&\alpha(G)-\sum_{C \in \mathscr{C}_{G}}\frac{|V(C)|}{2}+c(G)\\
&=&\alpha(K)+\frac{q}{2}-\sum_{C \in \mathscr{C}_{G}}\frac{|V(C)|}{2}+c(G)\\
&=&\alpha(K)-\sum_{C \in \mathscr{C}_{K}}\frac{|V(C)|}{2}+c(G)\\
&=&\alpha(K)-\sum_{C \in \mathscr{C}_{K}}\frac{|V(C)|}{2}+c(K)+1\\
&=&\alpha(T_{K})+1.
\end{eqnarray*}
Note that
\begin{equation} \label{E26}
[T_{G}]\cong [T_{K}].
\end{equation}
Then, in view of (\ref{E20}) and (\ref{E25})-(\ref{E26}), one has that
\begin{eqnarray*}
\alpha(T_{G})&=&\alpha(T_{K})+1\\
&=&\alpha([T_{K}])+c(K)+1\\
&=&\alpha([T_{G}])+c(G).\\
\end{eqnarray*}

This completes the proof.
$\square$

\section*{Acknowledgments}

This work was supported by the National Natural Science Foundation of China
(No. 11731002, 11771039 and 11771443), the Fundamental Research Funds for the Central Universities (No. 2016JBZ012) and the 111 Project of China (B16002).

\end{document}